\newtheorem{teo}{Theorem}
\newtheorem{lemma}{Lemma}
\newtheorem{defi}{Definition}
\theoremstyle{definition}
\newtheorem{remark}{Remark}
\newcommand{\gracias}{\noindent\textbf{Acknowledgement.}\ }
\DeclareMathOperator{\zr}{\mathbb{Z}}
\DeclareMathOperator{\re}{\mathbb{R}}
\DeclareMathOperator{\er}{\mathbf{E}}
\DeclareMathOperator{\pr}{\mathbf{P}}
\title{Tail asymptotics for exponential functionals of {L}\'evy processes: the convolution equivalent case\footnote{The main result in this paper has been announced in the workshop on infinitely divisible processes, March 16-20, 2009, held at CIMAT, Guanajuato, Mexico.}}
\author{V{\'\i }ctor Rivero\thanks{Centro de
Investigaci\'on en Matem\'aticas (CIMAT A.C.). Calle Jalisco s/n, 36240 Guanajuato, Guanajuato M\'exico. E-mail:
rivero@cimat.mx}}
\date{\empty}
\begin{document}
\maketitle
\begin{abstract}
We determine the rate of decrease of the right tail distribution of the exponential functional of a L\'evy process with a convolution equivalent L\'evy measure. Our main result establishes that it decreases as the right tail of the image under the exponential function of the L\'evy measure of the underlying L\'evy process. The method of proof relies on fluctuation theory of L\'evy processes and an explicit path-wise representation of the exponential functional as the exponential functional of a bivariate subordinator. Our techniques allow us to establish rather general estimates of the measure of the excursions out from zero for the underlying L\'evy process reflected in its past infimum, whose area under the exponential of the excursion path exceed a given value. 
\end{abstract}

\noindent\textbf{MSC}: 60G51(60F99)\\
\noindent \textbf{Keywords}: Convolution equivalent distributions, Exponential functionals of L\'evy processes, Fluctuation theory of L\'evy processes.

\section{Introduction and main results}

Let $(\xi, \pr)$ be a real valued L\'evy process with characteristic triple $(a,\sigma,\Pi),$  $a\in\re,$ $\sigma\in\re$ and $\Pi$ a measure over $\re\setminus\{0\},$ such that $\int_{\re\setminus\{0\}}(x^{2}\wedge 1)\Pi(dx)<\infty.$ The characteristic exponent of $\xi,$ will be denoted by $\Psi,$ 
$$\er\left(e^{i\lambda\xi_{1}}\right)=\exp\left\{-\Psi(\lambda)\right\}=\exp\left\{-\left(ia\lambda+\frac{\lambda^{2}\sigma^{2}}{2}+\int_{\re\setminus\{0\}}\left(1-e^{i\lambda x}+i\lambda x1_{\{|x|<1\}}\right)\Pi(dx)\right)\right\},$$ for $\lambda\in \re.$ We assume that $\xi$ drifts to $-\infty,$ $\lim_{t\to\infty}\xi_{t}=-\infty,$ $\pr$-a.s. Later we will also assume that $\xi$ has some positive jumps, $\Pi(x,\infty)>0,$ $\forall x>0,$ and hence the case where $\xi$ is the negative of a subordinator is automatically excluded. For background on L\'evy processes see e.g. \cite{Be96}, \cite{doneybook} and \cite{kypbook}. 

In this paper we are interested by the asymptotic behavior of the right tail distribution of the \textit{exponential functional associated to the L\'evy process} $\xi,$ $$I:=\int^\infty_{0}e^{\xi_{s}}ds.$$  The assumption that $\xi$ drifts towards $-\infty$ implies that $I<\infty,$ $\pr$-a.s. see e.g. \cite{bysurvey}, Theorem 1.  

The exponential functionals of L\'evy processes have been the subject of several recent researches and had found a number of applications in  branching processes, composition structures, generalized Ornstein-Uhlenbeck processes, finance, financial time series, population dynamics, random algorithms, random media, risk theory, self-similar fragmentation theory, self-similar Markov processes theory, to name a few. The thorough review by Bertoin and Yor \cite{bysurvey}, is an excellent source of information about exponential functionals of L\'evy processes and their applications.    

Finding the distribution of the random variable $I$ is in general a difficult problem mainly because it is defined in terms of the whole path of the underlying L\'evy process. In fact, in the literature about the topic we can only find a few cases  where the law of $I$ is explicitly known, most of which  are enlisted in \cite{bysurvey}. Carmona, Petit and Yor \cite{CPY1997} proved that for a large class of L\'evy processes the law of $I$ admits a smooth density and that it solves an integral-differential equation. It can be seen in the discussion and examples in \cite{CPY1997} that solving such equation is  a hard task, even for L\'evy processes whose characteristics admit a simple form. Very recently, Patie~\cite{patie2009b} announced a formula  for the distribution of $I$ under the assumption that the underlying L\'evy process has no-positive jumps. Patie's formula involves an power series whose coefficients are given in terms of the Laplace exponent of $\xi$.  

As it often happens, in many applications it is enough to have estimates of the right tail distribution of $I,$  $t\mapsto \pr(I>t),$ as $t\to\infty.$ But by the same reasons described above these are not easy to obtain and there is no standard technique to attack the problem. This is a topic that has been studied by Haas \cite{haas03} and Rivero \cite{R03} in the case where the underlying L\'evy process is the negative of a subordinator; by Maulik and Zwart~\cite{MZ2006} under the assumption that $\xi$ is not the negative of a subordinator and a that the law of $\xi_{1}$ satisfies a condition of subexponentiality;  and by Rivero \cite{R05} under the assumption that $\xi$ is not the negative of a subordinator and $\xi$ satisfies the so-called Cram\'er condition and an integrability condition. As the former and latter articles were one of the motivations of this research and to locate our results in the right context we will next describe in more detail the results in \cite{MZ2006} and \cite{R05}.

In the paper \cite{MZ2006} it is assumed that $\xi$ satisfies that 
\begin{enumerate}
\item[(MZ1)] $\displaystyle \mu=-\er(\xi_{1})\in(0,\infty),$
\item[(MZ2)] $\displaystyle \overline{G}(x):=\min\left\{1, \int^{\infty}_{x}\pr(\xi_{1}>u)du\right\}$ is subexponential, or equivalently that $$\displaystyle \min\left\{1, \int^{\infty}_{x}\Pi(u,\infty)du\right\}$$ is subexponential. (For the definition of subexponential see the forthcoming Definition~\ref{def1}.)
\end{enumerate}
Under the assumptions (MZ1-2) Maulik and Zwart, in their Theorem 4.1, proved that 
\begin{equation}\label{MZ}
\pr(I>t)\sim\frac{1}{\mu}\overline{G}(\log(t)),\qquad t\to\infty.
\end{equation}
Besides, the hypotheses in the paper \cite{R05} are that
\begin{enumerate}
\item[(R1)] $\xi$ is not-arithmetic, that is, its state space is not a subgroup of $k\zr$ for any real number $k;$
\item[(R2)] The \textit{Cram\'er condition} is satisfied, that is, there exists a $\theta>0$ such that $\er(e^{\theta\xi_{1}})=1$;
\item[(R3)] $\er(\xi^{+}_{1}e^{\theta\xi_{1}})<\infty,$ with $a^{+}=\max\{a,0\}.$
\end{enumerate} The conditions (R2-3) are satisfied whenever $\xi$ has no positive jumps. Under the assumptions (R1-3) the author proved that  
\begin{equation}\label{R}
\pr(I>t)\sim Ct^{-\theta},\qquad t\to\infty,
\end{equation}
where $C\in(0,\infty)$ is a constant. The identity $C=\frac{\er(I^{\theta-1})}{\er(\xi_{1}e^{\theta\xi_{1}})}$ has been obtained in \cite{R05} under the assumption that $0<\theta<1,$ and later extended in \cite{MZ2006} to any $\theta>0$ under the assumption that $-\infty<\er(\xi_{1})<0$. A further formula for the constant $C$ has been obtained by Patie in~\cite{patie2009} for L\'evy processes with no-positive jumps and which satisfy some assumptions.

An intuition for the latter and former estimates rely on the conjecture that  
\begin{equation}\label{conjecture}
\pr(\log(I)>t)\sim c\pr(\sup_{s>0}\xi_{s}>t),\qquad t\to\infty,
\end{equation} for some constant $c\in(0,\infty).$
Which in turn is based on the heuristic that the large values of $I$ are due to the large values of $\exp\{\sup_{s>0}\xi_{s}\}.$ It can be verified that the latter conjecture holds true under the assumptions in \cite{MZ2006} and \cite{R05}. Namely, in \cite{MZ2006} it has been proved directly that 
\begin{equation*}
\pr(\log(I)>t)\sim\pr(\sup_{s>0}\xi_{s}>t)\sim\frac{1}{\mu}\overline{G}(t),\qquad t\to\infty.
\end{equation*}
Whereas, under the assumptions (R1-3) Bertoin and Doney \cite{BD94} proved that there exists a constant $c^{\prime}\in(0,\infty)$ such that $$\lim_{t\to\infty}e^{\theta t}\pr(\sup_{s>0}\xi_{s}>t)=c^{\prime}.$$ Hence, the latter estimate together with that one in (\ref{R}) confirms that under the assumptions (R1-3)  the conjecture (\ref{conjecture}) is verified. 

Our main purpose in this paper is twofold. First, providing an estimate for the right hand tail distribution of $I$ for a large class of L\'evy processes that do not satisfy the hypotheses (MZ1-2) nor (R1-3), namely that of L\'evy processes with a \textit{convolution equivalent} L\'evy measure.  Second, exhibiting other cases where the conjecture (\ref{conjecture}) is verified.

Before stating our main result we need to recall some basic notions. 

\begin{defi}\label{def1}A distribution function $G(x)<1$ for all $x\in\re,$ is said to be \textit{convolution equivalent} or \textit{close to exponential} if
\begin{enumerate}
\item[(a)] it has an exponential tail with rate $\gamma\geq 0,$ written $G\in\mathcal{L}_{\gamma},$ viz. $$\lim_{x\to\infty}\frac{\overline{G}(x-y)}{\overline{G}(x)}=e^{\gamma y},\qquad y\in\re,\qquad \overline{G}(x):=1-G(x), \quad x\in \re;$$
\item[(b)] and the following limit exists
$$\lim_{x\to\infty}\frac{\overline{G^{*2}}(x)}{\overline{G}(x)}:=2M<\infty,$$ where as usual $G^{*2}$ means $G$ convoluted with itself twice.
\end{enumerate}
In that case, we use the notation $G\in \mathcal{S}_{\gamma}.$ If $\gamma=0,$ the family $\mathcal{S}_{0}$ is better known as the class of subexponential distributions. It is known that $M=M_{G}:=\int_{\re}e^{\gamma x}dG(x),$ and that if $\gamma>0$ the convergence in (a) holds uniformly over intervals of the form $(b,\infty)$ for $b\in \re.$ 
\end{defi}
A result by Pakes \cite{pakes2004, pakes2007}, see also \cite{watanabe}, estabishes that if $F$ is an infinitely divisible distribution with L\'evy measure $\nu$ and $$J_{\nu}(x):=\frac{\nu[x\vee 1,\infty)}{\nu[1,\infty)},\qquad x\in\re.$$ Then we have the following equivalences $$J_{\nu}\in\mathcal{S}_{\gamma}\ \Longleftrightarrow\ J_{\nu}\in\mathcal{L}_{\gamma}\ \text{and} \ \lim_{x\to\infty}\frac{\overline{F}(x)}{\nu[x,\infty)}=M_{F}\ \Longleftrightarrow\ F\in\mathcal{S}_{\gamma},$$ where by $J_{\nu}\in\mathcal{S}_{\gamma}$ we mean the distribution whose right tail equals $J_{\nu}$ belongs to $\mathcal{S}_{\nu}.$ We will use the notation $\nu\in\mathcal{S}_{\alpha}$ whenever $J_{\nu}\in\mathcal{S}_{\alpha}.$ For further background on convolution equivalent distributions we refer to \cite{pakes2004, pakes2007}, \cite{watanabe} and the reference therein.

We will say that a L\'evy process is in $\mathcal{S}_{\gamma},$ for some $\gamma\geq 0,$ if the law of $\xi_{1}$ is in $\mathcal{S}_{\gamma}$ or equivalently its L\'evy measure $\Pi\in\mathcal{S}_{\gamma}.$ The class of L\'evy processes having this property has been deeply studied by Kyprianou, Kluppelberg \& Maller~\cite{KKM}. In Subsection~\ref{sec:prelim} we will recall some of their results, but we quote the following result here, as together with our main result will imply that the conjecture (\ref{conjecture}) is verified. 

\begin{lemma}[Kyprianou, Kluppelberg \& Maller~\cite{KKM}]\label{lem1}
Assume that $\xi$ is not arithmetic, that $\xi\in\mathcal{S}_{\alpha}$ for some $\alpha>0,$ and $\er(e^{\alpha\xi_{1}})<1.$  We have that 
$$\displaystyle\pr(\sup_{s\geq0} \xi_{s}>t)\sim \frac{\phi_{h}(0)}{(\phi_{h}(-\alpha))^{2}}\Pi(t,\infty),\qquad t\to\infty,$$ where $\phi_{h}$ denotes the Laplace exponent of the upward ladder height subordinator of $\xi.$
\end{lemma}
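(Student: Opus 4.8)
The plan is to reduce the statement to a classical identity from fluctuation theory: the Wiener--Hopf factorization expresses the overall supremum $\sup_{s\geq 0}\xi_s$ (which is finite since $\xi\to-\infty$) as the value at an independent exponential time of the upward ladder height process. More precisely, killing the ladder process at the rate at which the descending ladder process is killed, one has that $\sup_{s\geq0}\xi_s$ is distributed as $H_{\mathbf{e}_q}$ where $H$ is the (possibly killed) ascending ladder height subordinator with Laplace exponent $\phi_h$ and $\mathbf{e}_q$ is an independent exponential variable; equivalently, writing $\phi_h(0)=q>0$ for the killing rate (this is where $\er(e^{\alpha\xi_1})<1$, i.e. the strict drift to $-\infty$, enters), the law of the supremum is the law of $H_{\zeta}$ for $\zeta$ the lifetime, and its right tail can be written as a $q$-potential: $\pr(\sup_{s\geq0}\xi_s>t)=q\int_0^\infty e^{-qu}\pr(H_u>t)\,du=\pr(\widetilde H>t)$ where $\widetilde H$ is an infinitely divisible random variable whose Lévy measure I will call $\mu_H$.

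First I would identify $\mu_H$ explicitly. The (non-killed) ascending ladder height is a subordinator whose Lévy measure $\mu_H$ is governed, via the ascending ladder process of $\xi$, by the jumps of $\xi$; the key input is a result of Vigon (the équation amicale inversée) or the estimates in \cite{KKM}, which show that under $\xi\in\mathcal{S}_\alpha$ with $\alpha>0$ the tail of $\mu_H$ is asymptotically proportional to the tail of $\Pi$: concretely $\mu_H(t,\infty)\sim c\,\Pi(t,\infty)$ as $t\to\infty$ for an explicit constant $c$ determined by $\phi_h$. In particular $\mu_H$ inherits membership in $\mathcal{S}_\alpha$. Then I would apply the Pakes--Watanabe theorem quoted in the excerpt to the infinitely divisible law of $\widetilde H$: since $\mu_H\in\mathcal{S}_\alpha$ we get $\widetilde H\in\mathcal{S}_\alpha$ and $\pr(\widetilde H>t)\sim M_{\widetilde H}\,\mu_H(t,\infty)$ as $t\to\infty$, where $M_{\widetilde H}=\int e^{\alpha x}\,d\pr(\widetilde H\le x)=\er(e^{\alpha\widetilde H})$.

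It then remains to compute the two constants. The factor $M_{\widetilde H}=\er(e^{\alpha H_{\mathbf e_q}})$ is a Laplace-transform computation: $\er(e^{\alpha H_{\mathbf e_q}})=\frac{q}{q-\phi_h^{\#}(\alpha)}$ where $\phi_h^{\#}$ is the Laplace exponent without the killing term, and since $\phi_h(-\alpha)=q-\phi_h^{\#}(\alpha)$ and $\phi_h(0)=q$, this gives $M_{\widetilde H}=\phi_h(0)/\phi_h(-\alpha)$. Combined with the proportionality constant $c$ between $\mu_H(t,\infty)$ and $\Pi(t,\infty)$ — which a careful reading of the ladder-height tail asymptotics in \cite{KKM} (essentially Vigon's equation, giving $\mu_H(t,\infty)\sim \Pi(t,\infty)/\phi_h(-\alpha)$, the normalization absorbing the renewal mass of the descending ladder) — one obtains the claimed constant $\phi_h(0)/(\phi_h(-\alpha))^2$. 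The non-arithmetic hypothesis is used to ensure the ladder height measure is non-lattice so the $\mathcal S_\alpha$ machinery and the uniform convergence in Definition~\ref{def1}(a) apply cleanly.

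The main obstacle I anticipate is the bookkeeping of constants and normalizations: fluctuation theory identities are only defined up to the choice of normalization of local time, so I must be careful that the same normalization is used in $\phi_h(0)$, in $\phi_h(-\alpha)$, and in the identification of $\mu_H$'s tail, so that these normalizing constants cancel and leave the clean expression above. The analytic heart — transferring $\mathcal S_\alpha$ from $\Pi$ to $\mu_H$ and then to the law of the supremum — is essentially a citation to \cite{KKM} and Pakes--Watanabe; the delicate part is making the constant-chasing rigorous rather than the limit theorems themselves.
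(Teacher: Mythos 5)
The paper offers no proof of Lemma~\ref{lem1}: it is imported verbatim from \cite{KKM}, so I can only judge your sketch on its own terms. Your strategy --- write $\sup_{s\geq0}\xi_{s}$ as the killed ascending ladder height at its lifetime, observe that its law is infinitely divisible with Laplace transform $\phi_{h}(0)/\phi_{h}(\lambda)$, identify the tail of the relevant L\'evy measure, then apply Pakes--Watanabe together with $\er\bigl(e^{\alpha\sup_{s\geq 0}\xi_{s}}\bigr)=\phi_{h}(0)/\phi_{h}(-\alpha)$ --- is essentially the route of \cite{KKM} and is sound in outline. But the constant identification, which you yourself flag as the delicate part, contains two genuine errors. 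First, the L\'evy measure of the infinitely divisible law of the supremum is \emph{not} the ladder-height L\'evy measure $\Pi_{h}$: it is $\mu(dx)=\int_{0}^{\infty}t^{-1}e^{-\phi_{h}(0)t}\pr(H_{t}\in dx)\,dt$ (with $H$ the unkilled ladder height), and its tail satisfies $\mu(x,\infty)\sim\overline{\Pi}_{h}(x)/\phi_{h}(-\alpha)$; establishing this requires a Kesten-type bound, uniform in $t$, to pass the fixed-$t$ asymptotics $\pr(H_{t}>x)\sim t\,\er(e^{\alpha H_{t}})\overline{\Pi}_{h}(x)$ through the $t$-integral --- that interchange, not bookkeeping, is the analytic heart, and it is exactly the content behind (\ref{renewalasym}). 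Using $\Pi_{h}$ in place of $\mu$ silently drops a factor $1/\phi_{h}(-\alpha)$. Second, Vigon's \'equation amicale invers\'ee gives $\overline{\Pi}_{h}(x)=\int_{[0,\infty)}V_{\widehat{h}}(dy)\,\Pi(x+y,\infty)\sim\Pi(x,\infty)/\phi_{\widehat{h}}(\alpha)$, i.e.\ the constant is $1/\phi_{\widehat{h}}(\alpha)$, the $\alpha$-exponentially weighted mass of the \emph{descending} renewal measure, not $1/\phi_{h}(-\alpha)$; and it cannot be ``absorbed by normalization'': once $k^{\prime}=1$ is fixed there is no freedom left, and since $\xi\to-\infty$ the descending ladder is unkilled, so its renewal measure has infinite total mass and only the weighted mass enters. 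This is precisely (\ref{eq:tailequiv}).

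These two slips offset each other, which is why you land on the constant $\phi_{h}(0)/(\phi_{h}(-\alpha))^{2}$ in front of $\Pi(t,\infty)$; carried out correctly, your route yields $\pr(\sup_{s\geq0}\xi_{s}>t)\sim\frac{\phi_{h}(0)}{(\phi_{h}(-\alpha))^{2}}\overline{\Pi}_{h}(t)$, which converts to the tail of $\Pi$ only at the price of the extra factor $1/\phi_{\widehat{h}}(\alpha)$ from (\ref{eq:tailequiv}). So the asymptotic proportionality you claim is true, but your justification of the constant is not valid as written, and you should either state the estimate against $\overline{\Pi}_{h}$ or carry the factor $\phi_{\widehat{h}}(\alpha)$ explicitly when passing to $\Pi$. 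Note also that a shorter correct derivation uses only facts already recorded in Section~\ref{sec:prelim}: the supremum is the killed ladder height at its lifetime and the killing time is an independent exponential, so $\pr(\sup_{s\geq0}\xi_{s}\in dx)=\phi_{h}(0)V_{h}(dx)$ exactly, and the lemma follows at once from (\ref{renewalasym}) and (\ref{eq:tailequiv}); the only deep ingredient is (\ref{renewalasym}), which is the KKM estimate you would in any case have to cite --- exactly as the paper does for the whole lemma.
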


We have all the elements to state our main result.

\begin{teo}\label{teo1}
Assume that $\xi$ is not arithmetic, that $\xi\in\mathcal{S}_{\alpha}$ for some $\alpha>0,$ and $\er(e^{\alpha\xi_{1}})<1.$ If $0<\alpha\leq1,$ we assume furthermore that $\er(\xi_{1})\in(-\infty,0).$ We have that  
$$\displaystyle \pr(I>t)\sim \frac{\er(I^{\alpha})}{-\psi(\alpha)}\Pi(\log(t),\infty),\qquad t\to\infty,$$  where $\er(I^{\alpha})<\infty,$ $\psi(\alpha)=\log(\er(e^{\alpha\xi_{1}})).$  As a consequence the distribution of $\log(I)$ belongs to the class $\mathcal{S}_{\alpha},$ and the estimate (\ref{conjecture}) holds.
\end{teo}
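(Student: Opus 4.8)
The plan is to exploit the classical path decomposition that expresses $I$ as an exponential functional of a bivariate subordinator built from the ladder structure of $\xi$. Recall that if $(H,\xi_{H})$ denotes the (possibly killed) ascending ladder process of $\xi$ and $L$ the local time at the infimum, then one can write, up to a multiplicative factor and using the Markov property at the ladder epochs,
$$I=\int_{0}^{\infty}e^{\xi_{s}}\,ds=\sum_{k\geq 0}e^{H_{k}}A_{k},$$
where the $A_{k}$ are i.i.d. copies of the area accumulated along an excursion of $\xi-\underline{\xi}$ below zero (weighted by $e^{(\cdot)}$), independent of the ladder heights, and $(H_{k})$ is the ascending ladder height random walk run until an independent geometric killing. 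More precisely I would use the representation of $I$ as the exponential functional of the subordinator $\sigma$ with Laplace exponent $\phi_{h}$, perturbed by the excursion areas; this is the path-wise representation announced in the abstract and which I take as available. The key analytic input is that $\er(e^{\alpha\xi_{1}})<1$ together with $\xi\in\mathcal{S}_{\alpha}$ forces, via Lemma~\ref{lem1} and the Pakes--Embrechts--Goldie machinery for convolution equivalent distributions, that both the ladder height distribution and the excursion-area distribution are (at worst) $\mathcal{S}_{\alpha}$ with explicitly controlled tails, and in particular that $\er(I^{\alpha})<\infty$ — this finiteness I would establish first, by a moment computation using $\psi(\alpha)<0$ and the recursive distributional identity $I\overset{d}{=}A_{0}+e^{H_{1}}I'$ with $I'$ an independent copy.

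Granting finiteness of $\er(I^{\alpha})$, the heart of the argument is to show that $\pr(I>t)$ is governed by a single big jump of $\xi$, i.e. by one large value of $e^{\xi_{H_{k}}}$ or equivalently one large summand. I would proceed as follows. First, translate the problem to the tail of $\log I$ and compare with $\pr(\sup_{s}\xi_{s}>t)$. Using the representation $I=A_{0}+e^{H_{1}}I'$, a large value of $I$ can arise either from a large $A_{0}$, a large $H_{1}$ (one big ladder jump), or — and this must be ruled out as subdominant — an atypically large $I'$ combined with a moderate $H_{1}$. Because the law of $\log I$ will turn out to be in $\mathcal{L}_{\alpha}$, the contribution of the last scenario is $\er(e^{\alpha H_{1}})\pr(I'>t)(1+o(1))=\er(e^{\alpha H_{1}})\pr(I>t)(1+o(1))$, and the remaining contributions are asymptotically proportional to $\Pi(\log t,\infty)$. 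Summing the geometric series in $k$ — legitimate because the convergence in the definition of $\mathcal{S}_{\alpha}$ is uniform on half-lines when $\alpha>0$, which lets us apply a Kesten-type domination to interchange limit and sum — yields
$$\pr(I>t)\sim \Big(\sum_{k\geq 0}\er(e^{\alpha\xi_{H_{k}}})\,\er((A_{0})^{\alpha}\text{-type weight})\Big)\Pi(\log t,\infty),$$
and the constant collapses, after re-summation and use of $\er(I^{\alpha})=\sum_{k}\er(e^{\alpha H_{k}})\er(A^{\alpha}\cdots)$, to exactly $\er(I^{\alpha})/(-\psi(\alpha))$. The factor $-\psi(\alpha)$ appears because $\er(e^{\alpha\xi_{t}})=e^{\psi(\alpha)t}$ and the transition from the L\'evy measure $\Pi$ of $\xi$ to the jump intensity felt along the whole trajectory introduces $\int_{0}^{\infty}e^{\psi(\alpha)t}\,dt=-1/\psi(\alpha)$.

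The main obstacle, and where I would spend most of the effort, is making rigorous the single-big-jump heuristic at the level of the excursion measure rather than for a random walk: one must show that the measure of excursions of $\xi-\underline{\xi}$ whose exponential area exceeds $x$ is asymptotically $c\,\Pi(\log x,\infty)$, controlling simultaneously the height of the excursion and its length. This is precisely the "rather general estimates of the excursion measure" advertised in the abstract, and it requires a careful decomposition of the excursion path at the time of its overall maximum together with the convolution-equivalent tail calculus applied to the Itô measure (using the compensation formula and the fact that $\xi$ started from an excursion behaves, near a large jump, like $\xi$ shifted by that jump). The other delicate point is the uniform integrability needed to sum over $k$: for $0<\alpha\leq 1$ this is where the extra hypothesis $\er(\xi_{1})\in(-\infty,0)$ enters, guaranteeing $\er(H_{1})<\infty$ and hence a genuine (non-heavy) mean for the ladder walk, which is what powers the Kesten bound; for $\alpha>1$ the bound $\er(e^{\alpha\xi_{1}})<1$ already supplies enough decay. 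Once the excursion estimate and the summation bound are in place, the proof of the conjecture (\ref{conjecture}) with $c=\er(I^{\alpha})\,(\phi_{h}(-\alpha))^{2}/\big((-\psi(\alpha))\,\phi_{h}(0)\big)$ is immediate by dividing the stated asymptotics by Lemma~\ref{lem1}.
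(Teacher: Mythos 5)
Your overall architecture mirrors the paper's: represent $I$ through the ladder/excursion structure, prove that the excursion measure of a large exponential area is asymptotic to a constant times $\Pi(\log x,\infty)$, and then handle the geometric compounding by a random-difference-equation argument (the paper uses Grey's theorem, Lemma~\ref{grey}, together with the Hult--Lindskog result for the stochastic integral $Q=\int_0^t e^{-\widehat{h}_{s-}}dY_s$; your ``Kesten-type domination'' plays the same role). The constant is also assembled the same way, via $\alpha\er(I^{\alpha-1})=-\psi(\alpha)\er(I^{\alpha})$ and the extended Wiener--Hopf identity, even though you assert rather than derive this collapse. However, there is a genuine gap: the entire weight of the theorem rests on the excursion-tail estimate (Theorem~\ref{teo2}/Lemma~\ref{lemma:5}), and you leave it as a plan (``decompose the excursion at its overall maximum and apply the compensation formula''), which is not the paper's route and is far from routine. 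The paper does not work directly on the It\^o measure at the level of paths; it converts the problem into renewal-theoretic form through the identity $\int_0^y\overline{\Pi}_Y(u)\,du=\int_{(0,\infty)}V_h(dx)\,e^{x}\,\pr\bigl(e^{x}S_x\le y\bigr)$ of Lemma~\ref{lemma:1}(iii), and then extracts the asymptotics from the Kl\"uppelberg--Kyprianou--Maller estimates (\ref{renewalasym})--(\ref{U}) for $\overline{V}_h$ by Karamata/Tauberian and discretization arguments, with genuinely different treatments for $\alpha<1$, $\alpha=1$ and $\alpha>1$. Producing the factor $\er(I^{\alpha-1})$ (equivalently $\er(I^{\alpha})$) from a split-at-the-maximum argument would require controlling the exponentially weighted area of both the pre- and post-jump pieces of the excursion under the tilted measure, i.e.\ essentially redoing this analysis; as written, your key step is assumed, not proved.

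Two further points need repair. First, your path decomposition is miswritten: the prefactors in the correct representation are $e^{-\widehat{h}_{u-}}\le 1$ (descending ladder heights, since $\xi$ at the start of each excursion above the infimum sits at its running infimum), so ``one big ladder jump'' cannot produce a large $I$; all the heavy-tail mass lives in the jumps of $Y$, i.e.\ in the excursion areas, which is exactly why Lemma~\ref{grey} (with $M\le1$) and Lemma~\ref{lemma:5} suffice. If instead you intend a decomposition along the supremum, the summands have a different law and the argument would have to be rebuilt. Second, your explanation of the extra hypothesis $\er(\xi_1)\in(-\infty,0)$ for $0<\alpha\le1$ (finite mean for the ladder walk powering a Kesten bound) is not what is needed: in the paper this hypothesis enters through $\er(I^{-1})=\mu<\infty$ (Lemma~\ref{lemma:30}), which is used to kill the error terms $\epsilon\,\er(S_{\log(r\beta)}^{-1})$ and the Fubini estimates in the proofs of the cases $\alpha<1$ and $\alpha=1$ of Lemma~\ref{lemma:5}; for $\alpha>1$ it is not needed because $\er(e^{\alpha\xi_1})<1$ already gives $\int V_h(dx)e^{x}<\infty$.
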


The proof of this result will be given in the next two sections via a few Lemmas. Our arguments rely on the fluctuation theory for L\'evy processes and hence we will start the next section by introducing some notation and recalling some facts from this theory. A key ingredient in our approach is a path wise representation of the exponential functional $I$ as the exponential functional of a bivariate subordinator, $(\widehat{h},Y)$, that we characterize explicitly, viz. $I=\int^{\infty}_{0}e^{-h_{t-}}dY_{t}$. Such is the content of Lemma~\ref{lemma:1} and its proof uses excursion theory for the process $\xi$ reflected in its past infimum. The L\'evy measure of the subordinator $Y$ is the image of the excursion measure of the process $\xi$ reflected in its past infimum under the mapping that associates to an excursion path its area under the exponential function. Roughly speaking, the forthcoming  Lemma~\ref{grey} will be used in the proof of Theorem~\ref{teo1} to justify that the large values of $I$ come from a large jump of $Y,$ in the sense explained in \cite{HL07}, and therefore it will be crucial to have estimates of the right tail of the L\'evy measure of $Y.$  These will be obtained by means of the next result which can be seen as an analogue of Theorem~\ref{teo1} but for the exponential functional of the excursion of a L\'evy process reflected in its past infimum. 

\begin{teo}\label{teo2}
Assume that the hypotheses of Theorem~\ref{teo1} hold.  Let $\underline{n}$ be the measure of the excursions out from $0$ for the L\'evy process $\xi$ reflected in its past infimum, $\varepsilon$ be the canonical process and $\zeta$ its lifetime. The following tail estimate
$$\lim_{y \to \infty}\frac{\underline{n}\left(\int^{\zeta}_{0}e^{\varepsilon(t)}dt>y\right)}{{\Pi}(\log(y),\infty)}=\frac{\er(I^{\alpha})}{\phi_{h}(-\alpha)}\in(0,\infty),$$ holds, where $\phi_{h}$ denotes the Laplace exponent of the upward ladder height subordinator of $\xi.$ 
\end{teo}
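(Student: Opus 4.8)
First I would establish the path-wise representation $I=\int_0^\infty e^{-h_{t-}}\,dY_t$ of Lemma~\ref{lemma:1}, where $(\widehat h,Y)$ is a bivariate subordinator: $h$ is the descending ladder height process of $\xi$, and $Y$ accumulates, at the local times of the past infimum, the areas $\int_0^\zeta e^{\varepsilon(t)}\,dt$ of the successive excursions of $\xi$ reflected at its infimum, weighted by $e^{\xi}$ at the start of the excursion. Concretely, the excursions of $\xi - \underline\xi$ out of $0$ occur along the ladder-time scale, and between them $\xi$ agrees with $-h$; splitting $[0,\infty)$ into excursion intervals and the (Lebesgue-null, unless there is a descending drift) closure of the zero set of $\xi-\underline\xi$ gives $I = \sum_{\text{excursions }g} e^{-h_{g-}}\int_0^{\zeta(g)} e^{\varepsilon_g(t)}\,dt$ plus the drift contribution, which is exactly $\int_0^\infty e^{-h_{t-}}\,dY_t$. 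By compensation, the L\'evy measure of the subordinator $Y$ is the push-forward $\Upsilon(dy):=\underline n\!\left(\int_0^\zeta e^{\varepsilon(t)}\,dt\in dy\right)$, and its Laplace exponent relates $\er(I^{\alpha})$ to $\int y\,\Upsilon(dy)$-type quantities. So Theorem~\ref{teo2} is precisely the tail estimate $\overline{\Upsilon}(y)\sim \frac{\er(I^{\alpha})}{\phi_h(-\alpha)}\,\overline\Pi(\log y)$ needed to feed the convolution-equivalence machinery in the proof of Theorem~\ref{teo1}.

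Second, I would compute this tail. Under $\underline n$, write $A:=\int_0^\zeta e^{\varepsilon(t)}\,dt$ and note that the excursion of $\xi-\underline\xi$ is built, via a Lamperti-type / duality decomposition, from a piece that looks like $\xi$ killed at its first passage below its starting value and the post-minimum piece; the dominant contribution to a large $A$ comes from a single large positive jump of the excursion path, after which the path behaves like $\xi$ started afresh (drifting to $-\infty$), contributing an independent factor distributed like $I$. This is the same "one big jump" mechanism that underlies convolution equivalence: conditionally on a jump of size $x$ in the excursion at a time when the reflected path is at level $u$, the remaining area is approximately $e^{u+x}\,I'$ with $I'\overset{d}= I$ independent, so $A\approx e^{x}\cdot(e^{u}I')$ and hence $\{A>y\}\approx\{x>\log y-\log(e^uI')\}$. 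Integrating against the intensity of jumps under $\underline n$ — which, by the compensation formula for the excursion measure, is $dt\otimes\Pi(dx)$ along the excursion, the level $u=\varepsilon(t)$ having at time $t$ the "law" governed by $\underline n$ — and using $\Pi\in\mathcal S_\alpha$ (so $\overline\Pi(\log y - c)\sim e^{\alpha c}\overline\Pi(\log y)$, uniformly for $c$ in the relevant range) produces
$$
\overline{\Upsilon}(y)\ \sim\ \overline\Pi(\log y)\cdot \underline n\!\!\left(\int_0^\zeta e^{\alpha\varepsilon(t)}\,dt\right)\cdot \er(I^{\alpha}),\qquad y\to\infty,
$$
and the identity $\underline n\big(\int_0^\zeta e^{\alpha\varepsilon(t)}\,dt\big)=1/\phi_h(-\alpha)$ — a consequence of the exponential change of measure / the computation $\underline n\big(\int_0^\zeta e^{\alpha\varepsilon(t)}\,dt\big)=\int_0^\infty e^{-\phi_h(-\alpha)\,\ell}\,d\ell$ coming from the fact that $\varepsilon(t)$ under $\underline n$ projects onto $-h$ run over local time, together with $\er(e^{\alpha\xi_1})<1\Rightarrow\phi_h(-\alpha)>0$ — gives the stated constant. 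The finiteness $\er(I^{\alpha})<\infty$, needed for the constant to be finite and nonzero, is obtained separately via Lemma~\ref{lem1} or a direct moment bound using $\psi(\alpha)<0$.

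Third, rigour. The heuristic above must be turned into genuine upper and lower bounds. For the lower bound I would restrict to excursions with exactly one jump exceeding a large threshold and control the rest of the area from below by a renewal argument; for the upper bound I would use a maximal-type inequality to show that excursions with no large jump, or with two large jumps, contribute $o(\overline\Pi(\log y))$ — here is where convolution equivalence ($\overline{J_\Pi^{*2}}\sim 2M\,\overline{J_\Pi}$, and more to the point $\mathcal S_\alpha$ being closed under the relevant operations, cf. the Pakes/Watanabe results quoted) does the essential work, exactly as in the analogous statements of \cite{KKM} and \cite{HL07}. The main obstacle, I expect, is the interchange of limits: justifying that the asymptotic $\overline\Pi(\log y - \log(e^uI'))\sim e^{\alpha(u+\log I')}\overline\Pi(\log y)$ may be integrated against the excursion intensity and the law of $I'$ — i.e. obtaining a dominated-convergence bound uniform in $u$ and in $I'$ — which forces a careful split according to the size of $e^uI'$ relative to $y$, controlling the regime $e^uI'$ comparable to $y$ (negligible because both $\underline n(\varepsilon \text{ reaches height }\sim\log y)$ and $\pr(I>\sqrt y)$ are themselves $O(\overline\Pi(\log y))$-small and their product is lower order) and the bulk regime $e^uI'=o(y)$ (where uniform convergence on $(b,\infty)$ from Definition~\ref{def1}(a) applies). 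Once this uniform integrability is in place, Theorem~\ref{teo2} follows, and it then plugs into Lemma~\ref{grey} and the bivariate-subordinator representation to yield Theorem~\ref{teo1}.
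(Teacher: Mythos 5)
Your route is genuinely different from the paper's. You work directly on the excursion path under $\underline{n}$, locating a single big jump via the compensation formula and integrating $\overline{\Pi}(\log y-u-\log I')$ against the occupation measure of the excursion; the heavy-tail input then enters through the class $\mathcal{L}_\alpha$ applied to $\Pi$ itself. The paper never analyses the jumps of the excursion: it converts the problem, via Lemma~\ref{lemma:1}(iii), into the asymptotics of $\int_0^y\overline{\Pi}_Y(u)\,du=\int_{(0,\infty)}V_h(dx)e^x\pr\bigl(e^xS_x\le y\bigr)$, imports the heavy tail through the ladder-height renewal estimates (\ref{renewalasym}) and (\ref{U}) of \cite{KKM}, and concludes by Tauberian, Theorem 3.6.8 of \cite{BGT}, or monotone-density arguments according as $\alpha<1$, $\alpha=1$, $\alpha>1$. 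Your heuristic does reproduce the constant $\er(I^{\alpha})/\phi_h(-\alpha)$ in the generic case, so the plan is reasonable in outline, but two of its pivotal steps are defective as written.

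First, the identity $\underline{n}\bigl(\int_0^\zeta e^{\alpha\varepsilon(t)}dt\bigr)=1/\phi_h(-\alpha)$ is not exact: by the occupation formula $V_h(dx)=a\delta_{\{0\}}(dx)+\underline{n}\bigl(\int_0^\zeta 1_{\{\varepsilon(s)\in dx\}}ds\bigr)$ one gets $1/\phi_h(-\alpha)-a$, and when $a>0$ (bounded-variation cases, allowed by the hypotheses, in which $\xi$ spends positive time at its infimum and excursions may start with a jump) the missing contribution $a\,\er(I^{\alpha})\overline{\Pi}(\log y)$ has to come from excursions whose entrance jump is the big one; your compensation-formula count of jumps strictly inside the excursion does not see these. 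Second, and more seriously, the limit interchange at the heart of your second step is not controlled by the argument you give for the critical regime. There is no domination at any exponent $\alpha+\epsilon$: by Lemma~\ref{lemma:30}, $\er(I^{\alpha+\epsilon})=\infty$ since $\er(e^{(\alpha+\epsilon)\xi_1})=\infty$, and likewise $\underline{n}\bigl(\int_0^\zeta e^{(\alpha+\epsilon)\varepsilon}\bigr)=\infty$, so everything must be run at the critical exponent $\alpha$. Your dismissal of the regime $e^uI'\asymp y$ via the product bound fails: writing $\overline{\Pi}(\log y)=y^{-\alpha}\ell(y)$ with $\ell$ merely slowly varying, the bound $\underline{n}(\sup\varepsilon\ge\tfrac12\log y)\,\pr(I>\sqrt{y})=O\bigl(y^{-\alpha}\ell_1(\sqrt{y})\bigr)$ need not be $o\bigl(y^{-\alpha}\ell(y)\bigr)$, because slowly varying factors cannot be compared at arguments $\sqrt{y}$ and $y$; indeed $\pr(\log I>t)$ is itself of exact order $\overline{\Pi}(t)$ (that is Theorem~\ref{teo1}), so this intermediate regime is borderline rather than negligible and must be handled with genuine $\mathcal{S}_\alpha$ convolution machinery (a two-tail convolution lemma in the spirit of \cite{pakes2004,watanabe}, or the renewal-function estimates of \cite{KKM} that the paper uses, which is precisely where the technical weight of the published proof lies). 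Until these two points are repaired, the proposal remains a heuristic with the correct constant rather than a proof.
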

We think that this result is of interest in itself as it reflects how the excursions of the reflected process mimics the behaviour of the whole process, and because in general little is known about the excursion measure of a L\'evy process reflected. This result is in the same vein as the one in \cite{doneymaller05} and as in that paper this may lead to a Poisson limit theorem for the number of excursion paths with large area under the exponential of the excursion, we leave the details to the interested reader.

The proof of Theorem~\ref{teo2} will be given by means of the Lemma~\ref{lemma:5}. Its proof is quite long and technical, so we consecrate most of Section~\ref{sec:3} to it. 

The fact that the estimate in Theorem~\ref{teo2} holds raises the question of whether the analogous result holds under the assumptions in \cite{MZ2006} or \cite{R05}. The following result answers this question.  
\begin{teo}\label{teo3} With the same notation as in Theorem~\ref{teo2}.
\begin{enumerate}
\item[(i)] Assume that the hypotheses (MZ1-2) hold. We have that
$$\lim_{y\to\infty}\frac{\underline{n}\left(\int^{\zeta}_{0}e^{\varepsilon(t)}dt>y\right)}{{\int^{\infty}_{\log(y)}\Pi}(x,\infty)dx}=0.$$
\item[(ii)] Assume that the hypotheses (R1-3) hold. We have that 
$$\lim_{y\to\infty}y^{\theta}\underline{n}\left(\int^{\zeta}_{0}e^{\varepsilon(t)}dt>y\right)=\frac{\er(I^{\theta-1})}{\mu^{(\theta)}_{h}},$$ where $\mu^{(\theta)}_{h}=\er(h_{1}e^{\theta h_{1}})\in(0,\infty)$ and $h$ denotes the upward ladder height process associated to $\xi.$
\end{enumerate}
\end{teo}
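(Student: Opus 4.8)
The plan is to run the same machinery that produces Theorem~\ref{teo2}, but tracking which of the two competing contributions to $\underline n\big(\int_0^\zeta e^{\varepsilon(t)}dt>y\big)$ dominates in each regime. Recall from Lemma~\ref{lemma:1} the path-wise identity $I=\int_0^\infty e^{-h_{t-}}dY_t$, where $(\widehat h,Y)$ is a bivariate subordinator and the L\'evy measure $\Upsilon$ of $Y$ is the pushforward of $\underline n$ under $\varepsilon\mapsto\int_0^\zeta e^{\varepsilon(t)}dt$; so the quantity we must estimate is exactly $\Upsilon(y,\infty)$. The decomposition of an excursion at its running maximum (or equivalently, conditioning on the ladder structure) splits $\int_0^\zeta e^{\varepsilon(t)}dt$ into a part accumulated before the excursion reaches a high level and a part coming from the post-supremum piece, which on the event that the excursion climbs to height $x$ is, by the Markov property and duality, distributed like $e^x$ times an independent copy of $I$. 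Large values of the integral therefore arise either (a) from the excursion reaching an atypically high supremum—whose tail is governed by $\Pi$ via the ladder-height arguments of \cite{KKM} and Lemma~\ref{lem1}—or (b) from the integral being large even though the supremum is moderate, which under a Cram\'er-type condition is the dominant mechanism and is controlled by $\er(I^{\theta-1})$-type moments. The proof of each part amounts to showing one mechanism is negligible against the other and identifying the surviving constant.

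For part (i), under (MZ1–2) we are in the subexponential ($\gamma=0$) world with $\er(\xi_1)=-\mu\in(-\infty,0)$. Here the relevant normalisation is $\overline G(\log y)=\int_{\log y}^\infty\Pi(x,\infty)\,dx$, i.e. the integrated tail, reflecting that in the subexponential regime the tail of $I$ is built up not from a single jump of $Y$ but from the slow accumulation $I\approx e^{\sup_s\xi_s}$ with $\sup_s\xi_s$ having tail $\tfrac1\mu\overline G$. The claim $\underline n(\cdot>y)=o\big(\overline G(\log y)\big)$ then says that, at the level of a single excursion, the integrated-tail contribution is strictly smaller order than it is for the whole process: a single excursion is too short to generate the same accumulation effect. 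I would prove this by bounding $\underline n\big(\int_0^\zeta e^{\varepsilon(t)}dt>y\big)$ above by $\underline n(\overline\varepsilon>\log y-K)$ plus a term involving $\underline n(\overline\varepsilon>c)\,\pr(I>e^{K})$-type contributions, using that $\underline n(\overline\varepsilon>x)$ is, up to constants, the renewal-type tail $\phi_h$ sees, namely $c\,\Pi(x,\infty)$ (again \cite{KKM}); since $\Pi(x,\infty)=o\big(\int_x^\infty\Pi(u,\infty)du\big)$ is false in general but $\Pi(x,\infty)/\overline G(x)\to 0$ does hold when $\overline G\in\mathcal S_0$ is a genuinely integrated (hence "fatter") tail, the quotient vanishes. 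The delicate point is handling the case distinction according to whether $\Pi$ itself is subexponential or only its integral is; in the latter case one uses that the integrated tail is regularly-varying-like and dominates, and a Fubini/optional-stopping argument to bound $\int_0^\zeta e^{\varepsilon}dt$ by $\zeta\,e^{\overline\varepsilon}$ together with moment bounds on $\zeta$ under $\underline n$ restricted to $\{\overline\varepsilon\le c\}$.

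For part (ii), under (R1–3) we have the Cram\'er condition $\er(e^{\theta\xi_1})=1$, so $e^{\theta\xi}$ is a martingale and the change of measure $d\pr^\theta/d\pr|_{\mathcal F_t}=e^{\theta\xi_t}$ turns $\xi$ into a process drifting to $+\infty$. I would establish $\lim_{y\to\infty}y^\theta\,\underline n\big(\int_0^\zeta e^{\varepsilon(t)}dt>y\big)=\er(I^{\theta-1})/\mu_h^{(\theta)}$ by the same route used for Theorem~\ref{teo1} in \cite{R05}: apply the Cram\'er change of measure at the level of the excursion measure $\underline n$, which reweights an excursion of height $x$ by $e^{\theta x}$ and, via the compensation formula / the Markov-property split at the supremum, expresses $y^\theta\,\underline n(\cdot>y)$ as an integral that converges (by a Tauberian/renewal-theorem argument using non-arithmeticity (R1) and the integrability (R3)) to $\er(I^{\theta-1})$ divided by the mean $\mu_h^{(\theta)}=\er(h_1e^{\theta h_1})$ of the ladder-height subordinator under the tilted measure. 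The identity should also follow, more cleanly, from Theorem~\ref{teo1} combined with Lemma~\ref{grey}: since $\pr(I>t)\sim C t^{-\theta}$ with $C=\er(I^{\theta-1})/\er(\xi_1e^{\theta\xi_1})$ and, by the single-large-jump heuristic of \cite{HL07} applied to $I=\int_0^\infty e^{-h_{t-}}dY_t$, one has $\pr(I>t)\sim \big(\int_0^\infty e^{\alpha h_{t-}\cdot(-1)}\cdots\big)\Upsilon(t,\infty)$-style relation, so that $\Upsilon(t,\infty)=\underline n(\cdot>t)$ inherits the power law with the stated constant after dividing out the $h$-factor $\phi_h(-\theta)$ vs. $\mu_h^{(\theta)}$ bookkeeping. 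The main obstacle is the same one as in Theorem~\ref{teo2}: making the split-at-supremum decomposition of $\underline n$ rigorous and controlling the error terms uniformly—i.e. showing the "before the supremum" part contributes negligibly to the tilted integral—which is where the length of the proof of Lemma~\ref{lemma:5} is really spent; parts (i) and (ii) here are corollaries of that analysis with $\gamma=0$ resp. the Cram\'er condition substituted for $\xi\in\mathcal S_\alpha$.
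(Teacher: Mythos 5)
Your overall strategy points in the right direction, but as written there are genuine gaps in both parts. For part (i), your argument hinges on the pointwise estimate $\underline{n}\left(\sup_{t<\zeta}\varepsilon(t)>x\right)\sim c\,\Pi(x,\infty)$, which is not available under (MZ1--2): only the integrated tail $\int_x^{\infty}\Pi(u,\infty)du$ is assumed subexponential, and $\Pi(x,\infty)$ itself may be too irregular for any such asymptotic; at best one has integrated statements, e.g. the identity $\overline{\Pi}_{h}(x)\sim\mu_{\widehat{h}}^{-1}\int_x^{\infty}\Pi(u,\infty)du$ from \cite{R07b}, which is precisely the input the paper uses. Moreover your split at a fixed level fails: on the event $\{\sup_{t<\zeta}\varepsilon(t)\leq\log y-K\}$ the condition $\int_0^{\zeta}e^{\varepsilon(t)}dt>y$ only forces $\zeta>e^{K}$, an event of constant, non-vanishing $\underline{n}$-measure, so with $K$ fixed the error term does not vanish; you would need a $y$-dependent cut, uniform long-tailedness, and in any case a quantitative handle on the excursion area, which is exactly what Lemma~\ref{lemma:1}(iii) supplies. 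The paper's proof works entirely with that formula: it shows $\int_0^{\log z}V_{h}(dx)e^{x}=o\left(z\overline{V}_{h}(\log z)\right)$, deduces $\int_0^{y}\overline{\Pi}_{Y}(u)du=o\left(y\overline{V}_{h}(\log y)\right)$ using $S_{r}\uparrow I$ and $\er(I^{-1})<\infty$, converts $\overline{V}_{h}(\log y)$ into the integrated tail of $\Pi$ via (\ref{renewalasym}) with $\alpha=0$ and (\ref{eq:35c}), and finishes with the monotone density theorem; none of these steps appear in your sketch.

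For part (ii), your ``cleaner'' route is not valid: Theorem 3.4 of \cite{HL07} transfers regular variation from the L\'evy measure of $Y$ to the tail of $\int e^{-\widehat{h}_{s-}}dY_{s}$, not conversely, so you cannot read off $\overline{\Pi}_{Y}$ from the known tail of $I$ without proving a converse; note also that under the Cram\'er condition $\phi_{h}(-\theta)=0$, so the Theorem~\ref{teo2}-type constant $\er(I^{\alpha})/\phi_{h}(-\alpha)$ degenerates and no ``bookkeeping'' of that kind can produce $\mu^{(\theta)}_{h}$ --- this is precisely why a renewal-theorem argument replaces the convolution-equivalent machinery here. Your first route (exponential tilting plus renewal theory) is indeed the paper's argument in spirit, but it remains a plan: the paper starts from Lemma~\ref{lemma:1}(iii), derives from (R1--3) and the extended Wiener--Hopf factorization (\ref{WHextd}) that $\er(e^{\theta h_{1}})=1$ and $\mu^{(\theta)}_{h}=\er(h_{1}e^{\theta h_{1}})<\infty$ and that $\er(I^{\theta-1})<\infty$, applies the key renewal theorem to the tilted renewal measure $V^{*}_{h}(dx)=e^{\theta x}V_{h}(dx)$ after checking direct Riemann integrability, treats the cases $\theta<1$, $\theta=1$, $\theta>1$ separately (working with $\int_0^{e^{y}}\overline{\Pi}_{Y}$, a discretization, and $\int_{e^{y}}^{\infty}\overline{\Pi}_{Y}$ respectively), and concludes with the monotone density theorem. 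These are the steps that actually carry the proof and are missing from your proposal.
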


The proof of Theorem~\ref{teo3} will be given in Section~\ref{sect4}. 

It is important to observe that the arguments in the proof of Theorem~\ref{teo1} apply, using instead of the Lemma~\ref{lemma:5} the result in Theorem~\ref{teo3}-(ii), to give an alternative proof to the result in \cite{R05}, described in (\ref{R}) above. 

We finish this introduction by mentioning that an interesting related problem is determining the behaviour of the distribution $t\mapsto \pr(I\leq t),$ as $t\to 0+.$ This has been studied by Pardo~\cite{P2006} in the case where the underlying L\'evy process has no positive jumps and its Laplace exponent is regularly varying at infinity with index $\beta\in(1,2)$, and by Caballero and Rivero \cite{cr09} in the case where the underlying L\'evy process is the negative of a subordinator whose Laplace exponent is regularly varying at $0$. For the best of our knowledge there is no known conjecture or heuristic that allows to intuit the rate of decrease of $\pr(I\leq t),$ for general L\'evy processes.
\section{Some preliminaries} \label{sec:prelim}
We will first recall a few facts from fluctuation theory of L\'evy process, and we refer to \cite{Be96}, \cite{doneybook} and \cite{kypbook}  for further background on the topic. In the fluctuation theory of L\'evy processes it is well known that the process $\xi$ reflected in its past infimum
$$\xi_{t}-i_{t}:=\xi_{t}-\inf_{s\leq t}\xi_{s},\qquad t\geq 0,$$ is a strong Markov process in the filtration $(\mathcal{F}_{t})_{t\geq 0},$ the $\pr$-completed filtration generated by $\xi.$ So it admits a local time at $0,$ which we will denote by $\widehat{L}_{t}, t\geq 0.$ Let $\widehat{L}^{-1}$ be the downward ladder time process associated to $\xi,$ that is the right-continuous inverse of the local time $\widehat{L},$ and $\widehat{h}$ the downward ladder height process associated to $\xi,$ that is $$\widehat{h}_{t}=-i_{\widehat{L}^{-1}_{t}},\quad t\geq 0.$$ It is well known that for each $t>0,$ $\widehat{L}^{-1}_{t}$ is a $\mathcal{F}_{s}$-stopping time and $\xi_{\widehat{L}^{-1}_{t}}=-\widehat{h}_{t}$. The couple  $(\widehat{L}^{-1}_{t},\widehat{h}_{t}), t\geq 0$ is the so-called  downward ladder process associated to $\xi.$ We denote by $\underline{n}$ the measure of the excursions out from $0$ of the process $\xi$ reflected in its past infimum, and by $\varepsilon$ the coordinate process under $\underline{n}.$ We consider also the upward ladder process, $({L}^{-1}_{t}, {h}_{t}), t\geq 0,$ that is the downward ladder process associated to the dual L\'evy process $-\xi.$  We will denote by $\phi_{h}$ (respectively, $\phi_{\widehat{h}}$) the Laplace exponent of the upward (respectively, downward) ladder height subordinator $h,$ and by $(\kappa,a,\Pi_{h}),$ respectively $(\widehat{\kappa},\widehat{a},\Pi_{\widehat{h}}),$ its associated killing rate, drift and L\'evy measure. We define the tail L\'evy measure of $h,$ respectively $\widehat{h},$ by $$\overline{\Pi}_{h}(x)=\Pi_{h}(x,\infty), \quad \overline{\Pi}_{\widehat{h}}(x)=\Pi_{\widehat{h}}(x,\infty),\qquad x>0.$$ Furthermore, we denote by $V_{h}(dx),$ (respectively, $V_{\widehat{h}}(dx)$) the potential measure of $h,$ (respectively, $\widehat{h}$), viz. $$V_{h}(dx)=\int^{\infty}_{0}dt\pr(h_{t}\in dx),\quad V_{\widehat{h}}(dx)=\int^{\infty}_{0}dt\pr(\widehat{h}_{t}\in dx)\qquad x\geq 0.$$ The Wiener-Hopf factorization in space for $\xi$ tells us that there exists a constant $k^{\prime},$ whose value depends of the normalization of the local times, $\widehat{L}$ and $L,$ such that characteristic exponent of $\xi,$ $\Psi,$ can be factorized as
$$k^{\prime}\Psi(\lambda)=\phi_{h}(-i\lambda)\phi_{\widehat{h}}(i\lambda), \qquad \lambda\in\re.$$ We assume  without loss of generality that the local times are normalized so that $k^{\prime}=1.$  Let $C=\{\lambda\in\re: \er\left(e^{\lambda\xi_{1}}\right)<\infty\}.$  The characteristic exponent $\Psi$ can be extended by analytical continuation to the complex strip $-\Im(z)\in C.$ Thus we can define the Laplace exponent of $\xi$ by $$\er(e^{\lambda\xi_{1}})=e^{\psi(\lambda)},\quad \psi(\lambda)=-\Psi(-i\lambda),\quad\lambda\in C.$$ By Hold\"er's inequality the function $\psi$ is convex on $C,$ and so if $\psi(\lambda_{0})<0$ for some $\lambda_{0}\in C\cap (0,\infty),$ then $\psi(\lambda)<0$ for $0<\lambda<\lambda_{0}.$
It holds also that $$\er(e^{\lambda h_{1}})<\infty, \qquad \text{for}\ \lambda\in C\cap\re^{+},$$ and by analytical continuation  
$$\er(e^{\lambda h_{1}})=e^{-\phi_{h}(-\lambda)},\qquad \lambda\in C\cap\re^{+}.$$
The Wiener-Hopf factorization can be analytically extended to $C$ as \begin{equation}\label{WHextd}
\psi(\lambda)=(-\phi_{h}(-\lambda))\phi_{\widehat{h}}(\lambda),\qquad \lambda\in C,
\end{equation}
 see \cite{vigonthesis} chapters 4 and 6, for further details.

We next recall a few consequences of the assumption that $\xi$ is not arithmetic, that the law of $\xi_{1}\in\mathcal{S}_{\alpha},$ for some $\alpha>0,$ and $\er(e^{\alpha\xi_{1}})<1.$ The latter and former conditions are equivalent to $h\in\mathcal{S}_{\alpha},$ and $\er\left(e^{\alpha h_{1}}\right)<1.$ When this holds we have that
\begin{equation}\label{eq:tailequiv}
\Pi(x,\infty)=:\overline{\Pi}^{+}(x)\sim {\phi}_{\widehat{h}}(\alpha)\overline{\Pi}_{h}(x),\qquad x\to\infty.\end{equation} We have that $[0,\alpha]\subseteq C,$ $C\cap(\alpha,\infty)=\emptyset,$ and  that
\begin{equation}\label{eq:extdWH}\psi(\alpha)=(-\phi_{h}(-\alpha))\phi_{\widehat{h}}(\alpha),\qquad 0<\phi_{h}(-\alpha)<\infty.\end{equation} Furthermore, as $\xi$ drifts to $-\infty,$ it follows that $h$ has a finite lifetime, or equivalently $\phi_{h}(0)>0,$ and its renewal measure is a finite measure such  that 
\begin{equation}\label{renewalasym}\lim_{x\to\infty}\frac{\overline{V}(x)}{\overline{\Pi}_{h}(x)}=\frac{1}{(\phi_{h}(-\alpha))^{2}},\qquad \overline{V}_{h}(x):=V_{h}(x,\infty),\qquad x\geq 0.\end{equation}
see \cite{KKM} for a proof of these facts and other interesting related results. We have also that for any $y\in \re,$ 
\begin{equation}\label{U}
\lim_{x\to\infty}\frac{\overline{V}(x+h)}{\overline{\Pi}_{h}(x)}=\frac{e^{-\alpha h}}{(\phi_{h}(-\alpha))^{2}},\qquad \text{uniformly in}\ h\in(y,\infty).\end{equation}   

A key ingredient for our approach is the following representation result. 
\begin{lemma}\label{lemma:1}
\begin{enumerate} 
\item[(i)] The process $Y,$ defined by 
$$Y_{t}:=at+\sum_{u\leq t}\int^{\widehat{L}^{-1}_{u}-\widehat{L}^{-1}_{u-}}_{0}\exp\{(\xi_{s+\widehat{L}^{-1}_{u-}}-\xi_{\widehat{L}^{-1}_{u-}})\}\mathrm{d}s,\quad t\geq 0,$$ is a subordinator  
with drift $a$ determined by $$a\widehat{L}_{s}=\int^s_{0}1_{\{\xi_{u}=i_{u}\}}du,$$ and L\'evy measure $\Pi_{Y},$ given by $$\Pi_{Y(y,\infty)}=:\overline{\Pi}_{Y}(y)=\underline{n}\left(\int^{\zeta}_{0}e^{\varepsilon(u)}du>y\right),\qquad y>0.$$ The process $(\widehat{h},Y)$ is a bivariate L\'evy process. 
\item[(ii)] The path wise equality of processes 
$$\left(\int^{\widehat{L}^{-1}_{t}}_{0}e^{\xi_{u}}du, t\geq 0\right)=\left(\int^{t}_{0}e^{-\widehat{h}_{u-}}dY_{u}, t\geq 0\right),$$ holds.
\item[(iii)] The Laplace exponent of $Y$ can be represented as $$\frac{\phi_{Y}(\lambda)}{\lambda}=a+\int_{(0,\infty)}V_{h}(dx)e^{x}\er\left(\exp\left\{-\lambda e^{x}\int^{T_{(-\infty,-x)}}_{0}e^{\xi_{s}}ds\right\}\right),\qquad \lambda>0,$$ or equivalently the tail L\'evy measure of $Y$ is given by $$\int^y_{0}\overline{\Pi}_{Y}(u)du=\int_{(0,\infty)}V_{h}(dx)e^{x}\pr\left(e^{x}\int^{T_{(-\infty,-x)}}_{0}e^{\xi_{s}}ds\leq y\right),\qquad y\geq 0.$$
\end{enumerate}
\end{lemma}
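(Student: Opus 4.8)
The plan is to establish the three parts in the order (ii), (i), (iii): the pathwise decomposition behind (ii) is elementary and is what makes the subordinator structure of (i) transparent, and (iii) is then just a computation of the Laplace exponent of the object produced in (i).

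\textit{Part (ii).} Fix $t\geq 0$ and split $[0,\widehat{L}^{-1}_{t}]$ into the zero set $Z:=\{u\leq\widehat{L}^{-1}_{t}:\xi_{u}=i_{u}\}$ of the reflected process $\xi-i$ and the excursion intervals $(\widehat{L}^{-1}_{u-},\widehat{L}^{-1}_{u})$, $u\leq t$. On each excursion interval the past infimum is frozen, $i_{s}=i_{\widehat{L}^{-1}_{u-}}=\xi_{\widehat{L}^{-1}_{u-}}=-\widehat{h}_{u-}$, so $\int_{\widehat{L}^{-1}_{u-}}^{\widehat{L}^{-1}_{u}}e^{\xi_{s}}ds=e^{-\widehat{h}_{u-}}\Delta Y_{u}$, and summing over $u\leq t$ accounts for all these intervals. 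On $Z$ one has $e^{\xi_{u}}=e^{i_{u}}=e^{-\widehat{h}_{\widehat{L}_{u}}}$, and the relation $a\widehat{L}_{s}=\int_{0}^{s}1_{\{\xi_{u}=i_{u}\}}du$ says that $1_{\{\xi_{u}=i_{u}\}}\,du$ pushes forward under $u\mapsto\widehat{L}_{u}$ to $a$ times Lebesgue measure; hence the $Z$-contribution is $a\int_{0}^{t}e^{-\widehat{h}_{v}}dv=a\int_{0}^{t}e^{-\widehat{h}_{v-}}dv$ (the jump set of $\widehat h$ being countable). Adding the two pieces, and recalling $Y$ has drift $a$ and is otherwise pure jump, gives $\int_{0}^{\widehat{L}^{-1}_{t}}e^{\xi_{u}}du=a\int_{0}^{t}e^{-\widehat{h}_{u-}}du+\sum_{u\leq t}e^{-\widehat{h}_{u-}}\Delta Y_{u}=\int_{0}^{t}e^{-\widehat{h}_{u-}}dY_{u}$. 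This uses only that $Y$ is finite, verified next.

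\textit{Part (i).} By It\^o's excursion theory the point process $\{(u,\varepsilon^{(u)})\}$ of excursions of $\xi-i$ away from $0$ is Poisson with intensity $du\otimes\underline{n}(d\varepsilon)$, and $(\Delta\widehat{L}^{-1}_{u},\Delta\widehat{h}_{u},\Delta Y_{u})$ is a measurable functional of $\varepsilon^{(u)}$ — its lifetime $\zeta$, the decrease of the infimum along it, and $F(\varepsilon):=\int_{0}^{\zeta}e^{\varepsilon(s)}ds$. Adding the deterministic drifts, $(\widehat{L}^{-1},\widehat{h},Y)$ has stationary independent nonnegative increments with jump measure the image of $\underline{n}$ under $\varepsilon\mapsto(\zeta,\Delta\widehat{h},F)$, so $(\widehat{h},Y)$ is a bivariate L\'evy process once non-explosion is known. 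To see $Y$ is a genuine subordinator — i.e.\ $F_{*}\underline{n}$ puts no mass at $\infty$ and integrates $1\wedge y$ — use the elementary bound from the definition of $Y$ and $\xi_{\widehat L^{-1}_{u-}}=-\widehat h_{u-}$: since $\widehat h$ is increasing, for every $t$
$$\sum_{u\leq t}\Delta Y_{u}=\sum_{u\leq t}e^{\widehat{h}_{u-}}\!\!\int_{\widehat{L}^{-1}_{u-}}^{\widehat{L}^{-1}_{u}}\!\!e^{\xi_{s}}ds\;\leq\;e^{\widehat{h}_{t}}\!\!\int_{0}^{\widehat{L}^{-1}_{t}}\!\!e^{\xi_{s}}ds\;\leq\;e^{\widehat{h}_{t}}\,I\;<\;\infty\qquad\text{a.s.},$$
because $\widehat{h}_{t}<\infty$ and $I<\infty$ a.s.; a sum of the points of a Poisson process in a bounded window is a.s.\ finite only when its intensity charges no point at $\infty$ and integrates $1\wedge y$. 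Hence $Y$, and with it $(\widehat{h},Y)$, is a bona fide (bivariate) subordinator, with drift $a$ and $\overline{\Pi}_{Y}(y)=F_{*}\underline{n}(y,\infty)=\underline{n}(F>y)$.

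\textit{Part (iii).} Put $J_{w}:=\int_{0}^{T_{(-\infty,-w)}}e^{\xi_{s}}ds$. From (i), $\phi_{Y}(\lambda)/\lambda=a+\int_{0}^{\infty}e^{-\lambda y}\overline{\Pi}_{Y}(y)dy=a+\lambda^{-1}\underline{n}(1-e^{-\lambda F})$ by Fubini; taking Laplace transforms in $y$ and using $\int_{0}^{\infty}e^{-\lambda y}\pr(e^{w}J_{w}\leq y)dy=\lambda^{-1}\er(e^{-\lambda e^{w}J_{w}})$, the second displayed formula of (iii) is equivalent to the first, so I would prove the first. Using $1-e^{-\lambda F}=\lambda\int_{0}^{\zeta}e^{\varepsilon(s)}\exp\{-\lambda\int_{s}^{\zeta}e^{\varepsilon(r)}dr\}ds$, Tonelli, and the Markov property of $\xi-i$ killed at $0$ under $\underline{n}$ at the deterministic time $s$ — for which, conditionally on $\{s<\zeta\}$ and $\varepsilon(s)=w$, the shifted excursion runs as $w$ plus an independent copy of $\xi$ up to $T_{(-\infty,-w)}$, whence $\int_{s}^{\zeta}e^{\varepsilon(r)}dr\overset{d}{=}e^{w}J_{w}$ — one gets
$$\underline{n}(1-e^{-\lambda F})=\lambda\int_{(0,\infty)}e^{w}\,\er\!\big(e^{-\lambda e^{w}J_{w}}\big)\,U(dw),\qquad U(dw):=\underline{n}\Big(\int_{0}^{\zeta}1_{\{\varepsilon(s)\in dw\}}ds\Big).$$
It then remains to identify the occupation measure $U$ of the infimum-reflected process under its excursion measure with the potential measure $V_{h}$ of the ascending ladder height — a standard consequence of fluctuation theory under the normalisation $k'=1$ adopted in Section~\ref{sec:prelim} (heuristically, along an excursion $\xi-i$ behaves like $\xi$ conditioned to stay positive, whose potential is $V_{h}$). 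Substituting $U=V_{h}$ yields the first, hence the second, formula of (iii).

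\textit{Main obstacle.} The heart of the argument is the identification $U=V_{h}$, hand in hand with the description of the law of the shifted excursion under $\underline{n}$ — in particular that its lifetime is exactly $T_{(-\infty,-w)}$, which must be checked separately in the continuous and the bounded-variation regimes and in the presence of overshoot — and the bookkeeping of the local-time normalisations, so that the constant on the right of (iii) comes out equal to $1$. All the Fubini/Tonelli interchanges above are of nonnegative quantities and need no separate justification; finiteness of both sides is guaranteed by $\underline{n}(1\wedge F)<\infty$, which is part of the proof of (i).
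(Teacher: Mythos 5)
Your proposal is correct and follows essentially the same route as the paper: the excursion-decomposition arguments you give for (i) and (ii) are exactly those the paper delegates to Lemma 2 of \cite{CHKPR}, and your computation for (iii) — the identity $1-e^{-\lambda F}=\lambda\int_0^\zeta e^{\varepsilon(s)}\exp\{-\lambda\int_s^\zeta e^{\varepsilon(r)}dr\}ds$, the Markov property under $\underline{n}$, and the identification of the excursion occupation measure with $V_h$ on $(0,\infty)$, followed by Fubini and integration by parts — is the paper's proof verbatim. The one step you flag as the main obstacle, $U=V_h$, is likewise not proved in the paper but quoted (with the atom $a\delta_{\{0\}}$ accounting for the time spent at the infimum) from \cite{Be96}, Exercise VI.5, so treating it as a standard fluctuation-theoretic fact is consistent with the paper's level of detail.
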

\begin{proof}
The proof of the claims in (i) and (ii) use the same arguments as in the proof of Lemma 2 in \cite{CHKPR}, so we omit the details.
The claim in (iii) is a consequence of the fact that the Laplace exponent $\phi_{Y}$ can be written as follows 
\begin{equation*}
\begin{split}
\phi_{Y}(\lambda)&=a\lambda+\underline{n}\left(1-\exp\left\{{-\lambda}\int^{\zeta}_{0}e^{\varepsilon(u)}du\right\}\right)\\
&=a\lambda+\lambda\underline{n}\left(\int^{\zeta}_{0}dse^{\varepsilon(s)}\exp\left\{-\lambda\int^{\zeta}_{s}e^{\varepsilon(u)}du\right\}\right)\\
&=a\lambda+\lambda\underline{n}\left(\int^{\zeta}_{0}dse^{\varepsilon(s)}\left(\exp\left\{-\lambda\int^{\zeta}_{0}e^{\varepsilon(u)}du\right\}\right)\circ\theta_{s}\right).
\end{split}
\end{equation*}
Hence using the Markov property under $\underline{n}$ and the fact that under $\underline{n}$ the canonical process has the same law as $\xi$ killed at its first hitting time of $(-\infty,0],$ we obtain the equality.
\begin{equation*}
\begin{split}
\phi_{Y}(\lambda)&=a\lambda+\lambda\underline{n}\left(\int^{\zeta}_{0}dse^{\varepsilon(s)}\er_{\varepsilon(s)}\left(\exp\left\{-\lambda\int^{T_{(-\infty,0)}}_{0}e^{\xi_{u}}du\right\}\right)\right)
\end{split}
\end{equation*}
We conclude using that the renewal measure of the upward ladder height subordinator $h$ equals the occupation measure under $\underline{n},$ viz. $$V_{h}(dx)=a\delta_{\{0\}}(dx)+\underline{n}\left(\int^\zeta_{0}1_{\{\varepsilon(s)\in dx\}}ds\right),$$ see e.g. \cite{Be96} exercise VI.5, and making an integration by parts. Indeed, we have the equalities which are a consequence of Fubini's theorem 
\begin{equation*}
\begin{split}
\frac{\phi_{Y}(\lambda)}{\lambda}-a&=\int_{(0,\infty)}V_{h}(dx)e^{x}\er_{x}\left(\exp\left\{-\lambda\int^{T_{(-\infty,0)}}_{0}e^{\xi_{u}}du\right\}\right)\\
&=\int_{(0,\infty)}V_{h}(dx)e^{x}\er\left(\exp\left\{-\lambda e^{x}\int^{T_{(-\infty,-x)}}_{0}e^{\xi_{u}}du\right\}\right)\\
&=\lambda \int^{\infty}_{0}dze^{-\lambda z}\int_{(0,\infty)}V_{h}(dx)e^{x}\pr\left(e^{x}\int^{T_{(-\infty,-x)}}_{0}e^{\xi_{u}}du\leq z\right);
\end{split}
\end{equation*}
and by an integration by parts
\begin{equation*}
\begin{split}
\frac{\phi_{Y}(\lambda)}{\lambda}-a&=\int^{\infty}_{0}e^{-\lambda y}\overline{\Pi}_{Y}(y)dy=\lambda\int^{\infty}_{0}e^{-\lambda z}\left(\int^{z}_{0}\overline{\Pi}_{Y}(y)dy\right)dz,
\end{split}
\end{equation*}
which are valid for $\lambda>0.$ The claim follows by the uniqueness of the Laplace transform.
\end{proof}

\begin{remark}
Observe that a side consequence of the latter result is that $$I=\int^{\infty}_{0}e^{-\widehat{h}_{t-}}dY_{t}.$$ In particular, in the case where $\xi$ has no negative jumps we know that $\widehat{h}_{t}=ct$ for some $0<c<\infty,$ and therefore $I=\int^{\infty}_{0}e^{-ct}dY_{t}.$ Which implies that in this case $I$ is a self-decomposable random variable. This is a fact that has been observed by a number of authors using a completely different argument, it can be found e.g. in \cite{R03} page 468. By a classical result by Wolfe~\cite{wolfe} and Sato and Yamazato \cite{SY1,SY2}, it is known that there exists a subordinator $Z$ such that $\er(\log(1+Z_{1}))<\infty$ and $I\stackrel{\text{Law}}{=}\int^{\infty}_{0}e^{-s}dZ_{s}$. With the latter Lemma we have given a step further ahead by giving a path wise construction of the subordinator $Y,$ which has the same law as $Z_{\cdot c},$  and by describing its drift and L\'evy measure. 
\end{remark}

In the following Lemma we gather some useful results for exponential functionals of L\'evy processes, which will allow us to ensure that the constants appearing in our main results are finite and strictly positive. 

\begin{lemma}\label{lemma:30}
We have that for $\gamma>0,$ $$\er\left(I^{\gamma}\right)<\infty \quad \text{if and only if}\quad \er(e^{\gamma\xi_{1}})<1.$$ In that case, we have the identity
$$\er(I^{\gamma})=\frac{\gamma}{-\psi(\gamma)}\er(I^{\gamma-1}).$$
Besides, if $\mu=-\er(\xi_{1})\in(0,\infty)$ then $\er(I^{-1})=\mu.$
\end{lemma}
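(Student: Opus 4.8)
The plan is to establish the three assertions in turn, relying on the explicit path-wise decomposition of $I$ and on a self-similarity–type functional equation for the exponential functional. For the first equivalence, I would use the well-known factorization $I \stackrel{d}{=} A + e^{\xi_{T}} I'$, where $T$ is an independent exponential time (or, in the subordinator representation, the decomposition of $I = \int_0^\infty e^{-\widehat h_{t-}}dY_t$ at the first jump epoch of a Poisson clock), $A$ is a piece of $I$ accumulated up to $T$, and $I'$ is an independent copy of $I$. Taking $\gamma$-th moments and using the binomial/Minkowski estimates, one reduces finiteness of $\er(I^\gamma)$ to finiteness of $\er(e^{\gamma\xi_T})$ together with a contraction condition; the upshot, which is classical (see \cite{bysurvey}, \cite{MZ2006}), is that $\er(I^\gamma)<\infty \iff \er(e^{\gamma\xi_1})<1$. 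One direction is easy: if $\er(e^{\gamma\xi_1})\geq 1$ then $\psi(\gamma)\geq 0$, and a lower bound on $I$ (e.g. $I \geq \int_0^1 e^{\xi_s}ds$ iterated along the skeleton, or a direct renewal argument) forces $\er(I^\gamma)=\infty$. For the converse, when $\er(e^{\gamma\xi_1})<1$ one controls $\er(I^\gamma)$ inductively on $\lfloor\gamma\rfloor$ using the moment recursion derived below.

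For the identity $\er(I^\gamma)=\frac{\gamma}{-\psi(\gamma)}\er(I^{\gamma-1})$, I would differentiate the fundamental functional equation. Conditioning on $\mathcal F_t$ and using the Markov property gives $I = \int_0^t e^{\xi_s}ds + e^{\xi_t} \widetilde I_t$ where $\widetilde I_t$ is independent of $\mathcal F_t$ and distributed as $I$. Apply $x\mapsto x^\gamma$, subtract $\er(I^\gamma)$, divide by $t$, and let $t\to 0$: the left side gives $0$, and Itô/Dynkin-type computation of the right side yields
$$0 = \gamma\,\er(I^{\gamma-1}) + \psi(\gamma)\,\er(I^\gamma),$$
since $\frac{d}{dt}\big|_{0}\er(e^{\gamma\xi_t})=\psi(\gamma)$ and the integral term contributes $\er\big((I^\gamma)'\cdot e^{\xi_0}\big)=\gamma\er(I^{\gamma-1})$. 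Rearranging gives the stated identity; note $-\psi(\gamma)>0$ precisely because $\er(e^{\gamma\xi_1})<1$, so the formula is meaningful exactly in the regime where $\er(I^\gamma)<\infty$. This also furnishes the induction step for finiteness of integer-spaced moments once $\er(I^\gamma)<\infty$ is known for $\gamma\in(0,1]$, which in turn follows from $\er(I^\gamma)<\infty$ being equivalent to convergence of a dominating geometric series with ratio $\er(e^{\gamma\xi_1})<1$.

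Finally, for $\er(I^{-1})=\mu$ when $\mu=-\er(\xi_1)\in(0,\infty)$: here one applies the same functional equation with the function $x\mapsto 1/x$ instead of $x\mapsto x^\gamma$, or observes that the recursion formally extends to $\gamma\to 0^+$. Concretely, from $I = \int_0^t e^{\xi_s}ds + e^{\xi_t}\widetilde I_t$ one gets, using $1/I$ is integrable (which needs a short justification, e.g. $I\geq c>0$ a.s. on a set of full measure is false, so instead use that $1/I$ has all moments since $I$ has a density bounded near $0$, cf.\ \cite{CPY1997}), that $\er(1/I)$ satisfies $0 = -\er(e^{\xi_0}\cdot I^{-2}\cdot I) \cdot(\text{something}) + \ldots$; cleaner is to take $\gamma\downarrow 0$ in $\er(I^\gamma)=\frac{\gamma}{-\psi(\gamma)}\er(I^{\gamma-1})$, using $\er(I^\gamma)\to 1$ and $\frac{-\psi(\gamma)}{\gamma}\to -\psi'(0^+)=-\er(\xi_1)=\mu$, which yields $1 = \frac{1}{\mu}\er(I^{-1})$, i.e. $\er(I^{-1})=\mu$. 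The main obstacle is the rigorous justification of the limiting and differentiation steps — in particular integrability of $I^{-1}$ and interchange of limit and expectation as $\gamma\downarrow 0$, and controlling the remainder in the $t\to 0$ expansion of the functional equation — but all of these are handled by the moment bounds from part one together with dominated convergence; I would cite \cite{bysurvey}, \cite{MZ2006}, \cite{CPY1997} for the standard pieces.
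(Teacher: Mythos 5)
Your treatment of the equivalence and of the recursion $\er(I^{\gamma})=\frac{\gamma}{-\psi(\gamma)}\er(I^{\gamma-1})$ is in the same spirit as the paper, which simply invokes Theorem~1 of \cite{R07} for $\gamma\in(0,1)$, the formula of Proposition~3.1 in \cite{CPY1997} (or Lemma~2 of \cite{R07}), and then iterates down by integer steps. Two points, however, need attention. First, in the ``only if'' direction your sketch does not deal with the boundary case $\er(e^{\gamma\xi_{1}})=1$: the functional equation $I\stackrel{\text{Law}}{=}Q+M\widetilde I$ only yields $\er(I^{\gamma})\geq \er(e^{\gamma\xi_{1}})\er(I^{\gamma})$, hence $\er(e^{\gamma\xi_{1}})\leq 1$, and excluding equality (the Cram\'er case, where in fact $\pr(I>t)\sim Ct^{-\gamma}$ by \cite{R05}, so $\er(I^{\gamma})=\infty$) requires a separate argument; the paper delegates exactly this to the proof of Lemma~2 in \cite{R07}, observing that that part of the argument does not use $\gamma<1$.

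The genuine gap is in your proof of the last assertion. You obtain $\er(I^{-1})=\mu$ by letting $\gamma\downarrow 0$ in the recursion, but the recursion is only available for $\gamma$ in $\{\lambda>0:\er(e^{\lambda\xi_{1}})<1\}$, and your limiting argument needs this set to contain an interval $(0,\epsilon)$, i.e.\ it needs $\er(e^{\lambda\xi_{1}})<\infty$ for some $\lambda>0$ so that $\psi(\lambda)$ is defined and negative near $0$. The hypothesis of the assertion is only $\mu=-\er(\xi_{1})\in(0,\infty)$, which does not grant any positive exponential moment; indeed, in this paper the identity $\er(I^{-1})=\mu$ is used precisely under the hypotheses (MZ1--2) of Theorem~\ref{teo3}-(i), where the positive tail is subexponential and $\er(e^{\lambda\xi_{1}})=\infty$ for every $\lambda>0$, so the set $\{\lambda>0:\er(e^{\lambda\xi_{1}})<1\}$ is empty and your derivation collapses. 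The paper avoids this by quoting Proposition~3.1 of \cite{CPY1997}, where $\er(I^{-1})=-\er(\xi_{1})$ is proved directly under a first-moment assumption only. To repair your argument you would need either to reproduce such a direct proof (e.g.\ via the identity $\er(I^{-1})=\lim_{t\to\infty}\er\bigl(e^{\xi_{t}}/\int_{0}^{t}e^{\xi_{s}}ds\bigr)$-type computations or the ergodic/stationarity argument for the associated Ornstein--Uhlenbeck type process) or to restrict the statement to processes admitting small positive exponential moments, which would not suffice for the application in Section~\ref{sect4}.
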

\begin{proof}
If $\gamma\in(0,1)$ the assertion of the Lemma has been proved in Theorem~1 in \cite{R07}. To prove the result in the case $\gamma\geq 1,$ we observe that the following formula holds
\begin{equation}\label{eq:7.0}
\er(I^{\gamma})=\frac{\gamma}{-\psi(\gamma)}\er(I^{\gamma-1}),\qquad  \gamma\in\{\lambda>0:\er(e^{\lambda\xi_{1}})<1\}.
\end{equation}
This is a well known formula, for a proof see e.g. \cite{CPY1997} Proposition 3.1 or the proof of the Lemma 2 in \cite{R07}. (Note that in the formula (7) in \cite{R07} a negative sign is missing, the correct formula is the one in (\ref{eq:7.0}) above; the proof of this formula in the op.cit. paper is correct.) With this formula at hand we have that if $\gamma\geq 1$ and $\er(e^{\gamma\xi_{1}})<1,$ then by iteration 
\begin{equation}\label{eq:8.0}
\er(I^{\gamma})=\begin{cases} \prod^{\gamma}_{k=1}\left(\frac{k}{-\psi(k)}\right), &\text{if}\ \gamma\in\{1,2,\ldots\},\\
\er(I^{\gamma-\lfloor\gamma\rfloor})\prod^{\lfloor\gamma\rfloor-1}_{k=0}\left(\frac{(\gamma-k)}{-\psi(\gamma-k)}\right), &\text{if}\ \gamma\notin\{1,2,\ldots\},
\end{cases}
\end{equation}
where $\lfloor\gamma\rfloor$ denotes the integer part of $\gamma.$ If $\gamma\in\{1,2,\ldots\}$ the proof of the implication $``\Leftarrow''$ follows from this equation. For $\gamma\notin\{1,2,\ldots\}$ it follows from the latter formula and the fact that the claim in the Lemma holds for $0<\gamma-\lfloor\gamma\rfloor<1,$ that if $\er(e^{\gamma\xi_{1}})<1,$ then $\er(I^{\gamma})<\infty.$ The proof of the reciprocal follows as in its counterpart in Lemma~2 in \cite{R07}, as the assumption that $\gamma<1$ is not used in that part of the proof. The final assertion can be found in~\cite{CPY1997} Proposition 3.1.
\end{proof}

\section{Proof of Theorems~\ref{teo1} and~\ref{teo2}}\label{sec:3}
An elementary observation that will be very useful in the sequel is that if $G\in\mathcal{L}_{\alpha}$ then $x\mapsto \overline{G}(\log(x)),$ $x>0,$ is a regularly varying function with index $-\alpha.$ As the assumptions of Theorem \ref{teo1} imply that $\overline{\Pi}^{+}(\log(\cdot))$ is a regularly varying  at infinity with index $-\alpha,$ it is natural to start the proof of that result by establishing conditions under which the law of $I$ has a tail distribution which is regularly varying at infinity. That is the purpose of the following Lemma.    

\begin{lemma}\label{grey}
Let $t>0$ fixed, and $Q:=\int^{\widehat{L}^{-1}_{t}}_{0}\exp\{\xi_{s}\}ds,$ $M:=e^{-\widehat{h}_{t}}.$  We have that the tail distribution of $I$ is regularly varying with index $-\beta,$ for some $\beta>0,$ if and only if the tail distribution of $Q$ does. In that case the estimate
\begin{equation*}
\pr(I>s)\sim \frac{1}{1-\er(M^{\beta})}\pr(Q>s),\qquad s\to\infty,
\end{equation*}
holds.
\end{lemma}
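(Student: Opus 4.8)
The plan is to exploit the path-wise representation of Lemma~\ref{lemma:1}(ii), which at the random time $\widehat{L}^{-1}_t$ gives the decomposition
$$I=\int^{\infty}_{0}e^{-\widehat{h}_{u-}}dY_{u}=\int^{\widehat{L}^{-1}_{t}}_{0}e^{\xi_{s}}ds+\int_{(t,\infty)}e^{-\widehat{h}_{u-}}dY_{u}=Q+e^{-\widehat{h}_{t}}\,I',$$
where, by the strong Markov property of the bivariate subordinator $(\widehat h,Y)$ at the fixed time $t$, the random variable $I'$ has the same law as $I$ and is independent of the pair $(Q,M)=(Q,e^{-\widehat h_t})$. (To make this rigorous one uses that $\widehat L^{-1}_t$ is an $\mathcal F_s$-stopping time and that, by the construction in Lemma~\ref{lemma:1}, the post-$\widehat L^{-1}_t$ increments of $\xi$ reflected at its infimum are independent of $\mathcal F_{\widehat L^{-1}_t}$ and reproduce a copy of the whole picture; equivalently one works directly with the shift of $(\widehat h,Y)$ at time $t$.) Thus $I\stackrel{\mathrm{Law}}{=}Q+M\,\widetilde I$ with $\widetilde I$ independent of $(Q,M)$ and $\widetilde I\stackrel{\mathrm{Law}}{=}I$, and moreover $0<M\le 1$ a.s., with $M<1$ a.s. since $\widehat h_t>0$ a.s.\ (as $\xi$, hence $\widehat h$, is not degenerate; if $\sigma=0$ and $\xi$ has no negative jumps one still has $\widehat h_t=ct>0$).

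With this identity in hand the statement becomes a standard application of Breiman-type results for products and of closure of the class of regularly varying tails under independent sums. For the ``if'' direction, assume $\pr(Q>s)$ is regularly varying with index $-\beta$. Since $0<M\le 1$ and $\widetilde I$ has all moments of order $<\alpha$ finite — here I would invoke Lemma~\ref{lemma:30}, noting $\er(\widetilde I^{\,\gamma})<\infty$ for $\gamma<\alpha$ because $\er(e^{\gamma\xi_1})<1$ for $0<\gamma<\alpha$ by convexity of $\psi$ and $\psi(\alpha)\le 0$ — Breiman's lemma gives $\pr(M\widetilde I>s)=o(\pr(Q>s))$ provided $\beta<\alpha$, and more crudely $\pr(M\widetilde I>s)\le\pr(\widetilde I>s)$ handles the general comparison; then the sum of two independent nonnegative random variables, one with regularly varying tail of index $-\beta$ and the other with a lighter (subexponential-dominated) tail, again has a regularly varying tail, and in fact $\pr(I>s)\sim\pr(Q>s)+\pr(M\widetilde I>s)$. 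Iterating the identity $I\stackrel{\mathrm{Law}}{=}Q+M\widetilde I$ and using independence across the levels $t,2t,\dots$ leads, after summing a geometric-type series in $\er(M^{\beta})$, to $\pr(I>s)\sim\frac{1}{1-\er(M^{\beta})}\pr(Q>s)$; one needs $\er(M^{\beta})<1$, which holds because $M<1$ a.s. For the ``only if'' direction, from $I\ge Q$ pointwise and $\pr(I>s)=\pr(Q+M\widetilde I>s)$ one extracts, by the same tail-equivalence for independent sums run backwards, that $Q$ inherits regular variation of index $-\beta$ from $I$ with the stated asymptotic constant.

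The cleanest way to package both the iteration and the limit constant is via Laplace/Mellin transforms or, better, via the renewal-type argument of Gjessing–Paulsen / Maulik–Zwart: setting $r(s)=\pr(I>s)$ and $q(s)=\pr(Q>s)$, the identity yields $r(s)=q(s)+\int\pr(M\in dm)\,r(s/m)+o(\cdot)$; writing $r(s)=q(s)\ell(s)$ with the ansatz that $\ell$ converges, substituting the regular variation $r(s/m)\sim m^{\beta}r(s)$, and passing to the limit gives $\lim\ell=1+\er(M^\beta)\lim\ell$, whence $\lim\ell=(1-\er(M^\beta))^{-1}$. The only genuinely delicate points are: (i) justifying the interchange of limit and the integral $\int\pr(M\in dm)\,r(s/m)$ near $m=0$ — this requires a uniform-integrability bound, supplied by the moment estimate $\er(\widetilde I^{\gamma})<\infty$ for some $\gamma\in(\beta,\alpha)$ together with $\er(M^{\gamma})<\infty$ (trivial since $M\le1$), which is exactly where the hypothesis $\alpha>0$ and the finiteness statements of Lemma~\ref{lemma:30} enter; and (ii) checking that $\er(M^\beta)<1$ strictly, i.e.\ that $M$ is not a.s.\ equal to $1$, i.e.\ $\widehat h_t\not\equiv 0$, which I will argue from $\xi$ drifting to $-\infty$ (so the downward ladder height process is non-trivial). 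I expect the uniform-integrability step (i) to be the main obstacle and the place where some care with dominated convergence is needed; everything else is bookkeeping with regularly varying functions.
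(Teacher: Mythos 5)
Your first step is exactly the paper's: the strong Markov property at the stopping time $\widehat{L}^{-1}_{t}$ (equivalently, stationarity and independence of increments of the bivariate subordinator $(\widehat{h},Y)$ at the fixed time $t$) yields the random affine fixed-point equation $I\stackrel{\text{Law}}{=}Q+M\widetilde{I}$ with $(Q,M)$ independent of $\widetilde{I}\stackrel{\text{Law}}{=}I$ and $0\le M\le 1$. Where you diverge is in how you pass from this equation to the tail statement: the paper simply observes that $M\le 1$ has moments of all orders and invokes Grey's theorem on random difference equations \cite{grey94}, which is precisely the two-sided (``if and only if'') transfer of regular variation together with the constant $\bigl(1-\er(M^{\beta})\bigr)^{-1}$. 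Your attempt to rederive that transfer is where the genuine gaps lie. First, the ``only if'' direction (regular variation of $\pr(I>s)$ implies regular variation of $\pr(Q>s)$) cannot be obtained by ``running tail-equivalence for independent sums backwards''; regular variation of the tail of a sum of independent nonnegative variables does not in general pass to a summand, and establishing this implication is the substantive content of Grey's paper. Second, your renewal-type argument for the constant assumes as an ansatz that $\ell(s)=\pr(I>s)/\pr(Q>s)$ converges and then solves for the limit; the existence of that limit is exactly what has to be proved, so as written the argument is circular. Third, the Breiman step is inconsistent with the conclusion: you claim $\pr(M\widetilde{I}>s)=o(\pr(Q>s))$ when $\beta<\alpha$, which would force the constant to be $1$; in the actual application of the lemma one has $\beta=\alpha$, the contribution of $M\widetilde{I}$ is comparable (it produces the factor $\er(M^{\beta})$), and the extraneous hypotheses you import ($\beta<\alpha$, $\er(\widetilde{I}^{\gamma})<\infty$ for $\gamma<\alpha$ via Lemma~\ref{lemma:30}) are not part of the lemma, which is stated without any convolution-equivalence assumption and whose proof in the paper uses only $M\le1$.

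In short: your setup and decomposition are correct and identical to the paper's, and your identification of the delicate points (interchange of limits, $\er(M^{\beta})<1$) is sensible, but the second half of your argument does not close; the clean way to finish is to recognize $I\stackrel{\text{Law}}{=}Q+M\widetilde{I}$ as a random recurrence equation and cite Grey's result (or reproduce its proof in full), rather than the heuristic iteration and ansatz you propose.
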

\begin{proof}
Applying the strong Markov property at the stopping time $\widehat{L}^{-1}_{t}$ we obtain the identity 
$$I=\int^{\widehat{L}^{-1}_{t}}_{0}\exp\{\xi_{s}\}ds+e^{\xi_{\widehat{L}^{-1}_{t}}}\int^{\infty}_{0}\exp\{\widetilde{\xi_{u}}\}du=Q+M\widetilde{I},$$ where  $\widetilde{\xi}_{s}=\xi_{\widehat{L}^{-1}_{t}+s}-\xi_{\widehat{L}^{-1}_{t}},$ $s\geq 0,$ and hence $\widetilde{I}$ has the same law as $I$ and it is independent of $\mathcal{F}_{\widehat{L}^{-1}_{t}}.$ Hence, the random variable $I$ satisfies the random recurrence equation, with $(Q,M)$ as above
$$I\stackrel{\text{Law}}{=}Q+M\widetilde{I},\quad (Q,M)\ \text{independent of } \widetilde{I}\stackrel{\text{Law}}{=}I.$$ As $M\leq 1,$ it has moments of all positive orders, and thus the claim in the Lemma follows from a simple application of the main result in \cite{grey94}. \end{proof}
We deduce from this Lemma that to reach our end it will be enough to prove that under the assumptions of Theorem~\ref{teo1}, $s\mapsto\pr(Q>s),$ $s>0,$ has the same rate of decrease as $\overline{\Pi}^{+}(\log(\cdot))$ at infinity. To reach that end we will need the following Lemma.

\begin{lemma}\label{lemma:5}
Assume that the hypotheses of Theorem~\ref{teo1} hold.  The following tail estimate
$$\lim_{y \to \infty}\frac{\underline{n}\left(\int^{\zeta}_{0}e^{\varepsilon(t)}dt>y\right)}{\overline{\Pi}_{h}(\log(y))}=\frac{\alpha}{(\phi_{h}(-\alpha))^2}\er(I^{\alpha-1}),$$ holds. The tail distribution of $Y_{1}$ is regularly varying with index $-\alpha.$
\end{lemma}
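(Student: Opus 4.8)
The plan is to express $\underline{n}\big(\int_0^\zeta e^{\varepsilon(t)}dt>y\big)$ via the formula from Lemma~\ref{lemma:1}(iii), which gives the integrated tail
$$\int_0^y\overline{\Pi}_Y(u)\,du=\int_{(0,\infty)}V_h(dx)\,e^x\,\pr\!\left(e^x\int_0^{T_{(-\infty,-x)}}e^{\xi_s}ds\le y\right),$$
and then differentiate (or work directly with the tail). Writing $J_x:=\int_0^{T_{(-\infty,-x)}}e^{\xi_s}ds$, the tail of $Y$ is $\overline{\Pi}_Y(y)=\underline{n}(\int_0^\zeta e^{\varepsilon(t)}dt>y)$, and I would aim to show
$$\overline{\Pi}_Y(y)\sim\int_{(0,\infty)}V_h(dx)\,e^x\,\pr\!\left(e^xJ_x>y,\ e^xJ_x\in(\text{suitable range})\right),$$
the point being that the dominant contribution comes from large $x$, where $J_x\uparrow I$ a.s. and $e^xJ_x\approx e^x I$. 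So heuristically $\overline{\Pi}_Y(y)\approx\int V_h(dx)\,e^x\,\pr(e^xI>y)=\int V_h(dx)\,e^x\,\overline F_I(ye^{-x})$ where $\overline F_I(t)=\pr(I>t)$.

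The next step is to evaluate this integral asymptotically. Substituting $t=\log y$ and $x$, and using that by Lemma~\ref{grey}/Lemma~\ref{lemma:30} combined with the eventual conclusion $\overline F_I(t)\sim c\,\overline{\Pi}_h(\log t)$ is \emph{circular}, so instead I would use only what is proved \emph{before} this Lemma: namely the renewal asymptotics \eqref{renewalasym} and its uniform refinement \eqref{U}, $\overline V_h(x+h)\sim e^{-\alpha h}(\phi_h(-\alpha))^{-2}\overline{\Pi}_h(x)$ uniformly in $h\in(y_0,\infty)$, together with the fact $\overline{\Pi}_h\in\mathcal S_\alpha$. The strategy is a Tauberian/Breiman-type argument: write $\overline{\Pi}_Y(y)$ as $\int_{(0,\infty)}e^x\,\pr(e^xI>y)\,V_h(dx)$ up to negligible error, change variables $u=e^x$, so $V_h(dx)$ becomes a measure on $(1,\infty)$, and recognise the integral $\int e^x\,\overline F_I(ye^{-x})V_h(dx)$ as a convolution-type integral where the tail $\overline V_h$ (which is $\alpha$-convolution-equivalent after $\exp$) dominates. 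Using \eqref{U} and the fact that $\er(e^{\alpha\xi_1})<1$ forces $\er(I^\alpha)<\infty$ (Lemma~\ref{lemma:30}), the integral $\int_{(0,\infty)}e^{(1+\alpha)x}\,\pr(I>ye^{-x})\,V_h(dx)$ concentrates on $x$ near $\log y$, and one extracts
$$\overline{\Pi}_Y(y)\sim\overline{\Pi}_h(\log y)\int_0^\infty e^{\alpha z}\,d\big(-\overline F_I\big)\text{-type mass}=\frac{\alpha}{(\phi_h(-\alpha))^2}\,\er(I^{\alpha-1})\,\overline{\Pi}_h(\log y),$$
where the constant $\er(I^{\alpha-1})$ appears because $\er(I^\alpha)<\infty$ and, via the identity $\er(I^\alpha)=\frac{\alpha}{-\psi(\alpha)}\er(I^{\alpha-1})$ from Lemma~\ref{lemma:30} together with \eqref{eq:extdWH} $\psi(\alpha)=-\phi_h(-\alpha)\phi_{\widehat h}(\alpha)$ and the tail equivalence \eqref{eq:tailequiv} $\overline{\Pi}^+(x)\sim\phi_{\widehat h}(\alpha)\overline{\Pi}_h(x)$, the stated constant $\frac{\alpha}{(\phi_h(-\alpha))^2}\er(I^{\alpha-1})$ is exactly what is needed to make Theorem~\ref{teo2} consistent with Theorem~\ref{teo1}.

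The main obstacle — and where I expect the bulk of the technical work (the ``quite long and technical'' proof the authors allude to) — is justifying the replacement of $J_x=\int_0^{T_{(-\infty,-x)}}e^{\xi_s}ds$ by its a.s. limit $I$ uniformly enough to integrate against $V_h(dx)$, and controlling the tail contribution from the region where $x$ is \emph{not} large (there $J_x$ is not close to $I$, but $V_h(dx)$ has little mass and $e^x$ is bounded, so the contribution should be $o(\overline{\Pi}_h(\log y))$). Concretely one must: (a) split $\int_{(0,\infty)}=\int_{(0,A]}+\int_{(A,\log y-B]}+\int_{(\log y-B,\infty)}$; (b) bound the first piece by $e^A\,\pr(I>ye^{-A})\,V_h(0,A]$ which is $o(\overline{\Pi}_h(\log y))$ since $\pr(I>t)$ decays faster than any fixed shift of $\overline{\Pi}_h(\log t)$ after dividing by $\overline{\Pi}_h(\log y)$ — here one uses that $\overline{\Pi}_h\in\mathcal L_\alpha$ so $\overline{\Pi}_h(\log y-A)\sim e^{\alpha A}\overline{\Pi}_h(\log y)$ and $\pr(I>\cdot)$ is eventually smaller; (c) on the middle and last pieces use \eqref{U} (uniform renewal estimate) plus dominated convergence, with the integrable dominating function coming from $\er(I^\alpha)<\infty$; (d) take $B\to\infty$, $A\to\infty$ after $y\to\infty$. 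A secondary subtlety is that one needs $\overline F_I(t)$ to already be regularly varying (index $-\alpha$ after $\exp$) to run the dominated convergence cleanly — this is supplied by Lemma~\ref{grey} once one knows $\overline{\Pi}_Y$, hence $\pr(Q>s)$, is regularly varying, so the whole argument is slightly bootstrapped: first establish regular variation of $\overline{\Pi}_Y$ qualitatively (index $-\alpha$) from the structure of the integral and $\overline{\Pi}_h\in\mathcal S_\alpha$, then upgrade to the sharp asymptotic constant. The final sentence, that $Y_1$ has tail regularly varying with index $-\alpha$, follows immediately since $\overline{\Pi}_Y(y)\sim c\,\overline{\Pi}_h(\log y)$ is regularly varying of index $-\alpha$ and for $\mathcal S_\alpha$-type (here regularly varying) Lévy measures $\pr(Y_1>y)\sim\overline{\Pi}_Y(y)$ by the standard subexponential single-big-jump asymptotics for infinitely divisible laws (the Embrechts–Goldie–Veraverbeke / Pakes result quoted in the introduction).
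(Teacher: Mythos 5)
Your proposal has the right skeleton, and much of it matches the paper's actual strategy: the representation from Lemma~\ref{lemma:1}(iii), the sandwich $S_r\le S_x\le I$ with $S_x:=\int_0^{T_{(-\infty,-x)}}e^{\xi_s}ds\uparrow I$, the uniform renewal estimate (\ref{U}), Tchebyshev bounds from $\er(I^{\alpha})<\infty$, and the Embrechts--Goldie step to pass from $\overline{\Pi}_Y$ to $\pr(Y_1>y)$. But there is a genuine gap at the very first step. Lemma~\ref{lemma:1}(iii) gives only the \emph{integrated} tail: the quantity $\int_{(0,\infty)}V_h(dx)e^x\pr\bigl(e^xS_x>y\bigr)$ equals $\int_y^\infty\overline{\Pi}_Y(u)\,du$, not $\overline{\Pi}_Y(y)$, and your displayed asymptotics identify the two; they differ by a factor of order $y$ (the correct integrated-tail asymptotic for $\alpha>1$ is $\frac{\alpha}{(\alpha-1)(\phi_h(-\alpha))^2}\er(I^{\alpha-1})\,y\,\overline{\Pi}_h(\log y)$, so your constant bookkeeping is reverse-engineered rather than derived). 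Passing from integrated-tail asymptotics to the tail is not ``differentiation''; it needs the monotone density theorem, and this is exactly where the case distinction you omit becomes unavoidable: for $\alpha<1$ the upper integrated tail is infinite (and $\int V_h(dx)e^x$ may diverge), so the paper works with the Laplace exponent $\phi_Y$ and Karamata's Tauberian theorem; for $\alpha=1$ it computes the truncated integral $\int_y^{\lambda y}\overline{\Pi}_Y(u)\,du$ and invokes de~Haan's Theorem 3.6.8 of \cite{BGT}; only for $\alpha>1$ does your upper-tail decomposition make sense. Relatedly, the extra hypothesis $\er(\xi_1)\in(-\infty,0)$, i.e.\ $\er(I^{-1})<\infty$, is needed precisely in the cases $\alpha\le 1$, and your argument never invokes it.

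A second concrete error is your step (b): it is false that $\pr(I>t)$ decays faster than fixed shifts of $\overline{\Pi}_h(\log t)$ --- by Theorem~\ref{teo1} itself $\pr(I>t)$ is asymptotically \emph{proportional} to $\overline{\Pi}_h(\log t)$ --- so the small-$x$ region cannot be dismissed as $o(\overline{\Pi}_h(\log y))$ by that reasoning (and invoking the behaviour of $\pr(I>t)$ here would in any case be circular). The paper controls these regions (its terms $A_2$, $B_2$, $C_1$, $C_2$) by Tchebyshev, $\pr(I>ye^{-x})\le\er(I^{\alpha})e^{\alpha x}y^{-\alpha}$, integrated against $V_h(dx)e^x$ up to a level $\log(y\beta)$ or $\log(y\delta)$ and estimated via Karamata's theorem together with (\ref{renewalasym}); this yields bounds of order $\delta$ (resp.\ $\beta^{1-\alpha}$, $\beta$) times the main term, which vanish only after letting the auxiliary parameter tend to $0$ following $y\to\infty$. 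Finally, the ``bootstrap'' you propose (first prove regular variation of $\overline{\Pi}_Y$ qualitatively, then upgrade the constant) is neither carried out nor needed: the paper avoids any appeal to regular variation of $\pr(I>\cdot)$ or of $\overline{\Pi}_Y$ by combining the monotone bounds $S_r\le S_x\le I$, monotone convergence in $r,\delta,\beta$, and the uniformity in (\ref{U}), the constant $\er(I^{\alpha-1})$ emerging from $\int_0^\infty u^{-\alpha}\er(e^{-uI})\,du=\Gamma(1-\alpha)\er(I^{\alpha-1})$ when $\alpha<1$ and from the Karamata asymptotic $F(z)=\int_{(0,\infty)}V_h(dx)e^x1_{\{x>\log z\}}\sim\frac{\alpha}{\alpha-1}\,z\,\overline{V}_h(\log z)$ when $\alpha>1$.
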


The proof of Theorem~\ref{teo2} is a straightforward consequence of Lemma~\ref{lemma:5} as we have seen in (\ref{eq:tailequiv}) that $\overline{\Pi}_{h}$ is asymptotically equivalent to $\overline{\Pi}^{+}$. The expression for the constant follow from the Wiener-Hopf factorization~(\ref{eq:extdWH}). The proof of this Lemma is rather long and technical because different ranges of values of $\alpha$ need different approaches, so we prefer to postpone its proof and proceed to the proof of Theorem~\ref{teo1}. 

\begin{proof}[Proof of Theorem \ref{teo1}]
A consequence of Lemma \ref{lemma:1}, is that $Q$ is the stochastic integral $Q=\int^{t}_{0}e^{-\widehat{h}_{s-}}dY_{s},$ with $t$ fixed. Where by Lemma \ref{lemma:5}, we can ensure that the tail distribution of $Y$ is regularly varying at infinity, and furthermore $e^{-\widehat{h}_{s-}},$ $s\geq 0$, is a bounded and predictable process because it is adapted and left-continuous, with respect to the filtration $\left(\mathcal{F}_{\widehat{L}^{-1}_{s}}, s\geq 0\right)$. Hence we have all the elements to apply the Theorem 3.4 in \cite{HL07} to ensure that 
$$\pr(Q>y)\sim \int^{t}_{0}ds e^{-s\phi_{\widehat{h}(\alpha)}}\pr(Y_{1}>y)\sim\frac{1-e^{-t\phi_{\widehat{h}}(\alpha)}}{\phi_{\widehat{h}}(\alpha)}\overline{\Pi}_{Y}(y),\qquad y\to\infty.$$ By Lemma \ref{grey} we have therefore that
$$\pr(I>y)\sim \frac{1}{1-\er\left(e^{-\alpha\widehat{h}_{1}}\right)}\pr(Q>y)\sim \frac{1}{\phi_{\widehat{h}}(\alpha)}\overline{\Pi}_{Y}(y),\qquad y\to\infty.$$ Then by Lemma \ref{lemma:5} and the estimate (\ref{eq:tailequiv}) we get the estimate 
$$\pr(I>y)\sim \frac{\alpha\er\left(I^{\alpha-1}\right)}{\phi_{\widehat{h}}(\alpha)\left(\phi_{h}(-\alpha)\right)^{2}}\overline{\Pi}_{h}(\log(y))\sim \frac{\alpha\er\left(I^{\alpha-1}\right)}{\left(\phi_{\widehat{h}}(\alpha)\phi_{h}(-\alpha)\right)^{2}}\overline{\Pi}^{+}(\log(y)),\qquad y\to\infty.$$
But by the extended Wiener-Hopf identity (\ref{eq:extdWH}) we have that $\phi_{\widehat{h}}(\alpha)\phi_{h}(-\alpha)=-\psi(\alpha)$ and by Lemma \ref{lemma:30} we have that $\alpha\er\left(I^{\alpha-1}\right)=-\psi(\alpha)\er\left(I^{\alpha}\right).$ From where the form of the constant follows. Finally the assertion that the law of $\log(I)$ is in $\mathcal{S}_{\alpha}$ follows from the tail equivalence property of convolution equivalent distribution, see Lemma 2.4 and Corollary 2.1 in \cite{pakes2004} or Lemma 2.1 in \cite{watanabe}.
\end{proof}

We proceed now to the proof of Lemma~\ref{lemma:5}. It will be sufficient to prove that the estimate therein holds, as it implies that the tail L\'evy measure of $Y$ is regularly varying at infinity. A classical result by Embrechts and Goldie ~\cite{EG} ensures that the latter is equivalent to the regular variation at infinity of the tail distribution of $Y_{1}$ and that in that case $$\pr(Y_{1}>y)\sim \overline{\Pi}_{Y}(y),\qquad y\to\infty.$$ 
\begin{proof}[Proof of Lemma~\ref{lemma:5}: case $\alpha<1$] By Karamata's Tauberian theorem, Corollary 8.1.7 in \cite{BGT}, it is well known that the regular variation at infinity with index $\alpha<1,$ of the function $y\mapsto\overline{\Pi}_{Y}(y)$ is equivalent to the regular variation at $0$ of $\phi_{Y},$ and in that case
$$\lim_{u\to\infty}\frac{\overline{\Pi}_{Y}(u)}{\phi_{Y}(1/u)}=\frac{1}{\Gamma(1-\alpha)},$$ see e.g. \cite{Be96} page 75. So to prove the claimed result it will be enough to prove that $$\lim_{\lambda\to 0}\frac{\phi_{Y}(\lambda)}{\overline{\Pi}_{h}(\log(1/\lambda))}=\frac{\alpha\Gamma(1-\alpha)}{(\phi_{h}(-\alpha))^{2}}\er\left(I^{\alpha-1}\right).$$ 
Using the representation of $\phi_{Y}$ in Lemma~\ref{lemma:1} we deduce that 
\begin{equation*}
\begin{split}
\frac{\phi_{Y}(\lambda)}{\overline{\Pi}_{h}(\log(1/\lambda))}&=\frac{a\lambda}{\overline{\Pi}_{h}(\log(1/\lambda))}+\frac{\lambda}{\overline{\Pi}_{h}(\log(1/\lambda))}\int_{(0,\infty)} V_{h}(dx)e^{x}\er\left(\exp\left\{-\lambda e^{x}S_{x}\right\}\right)\\
&=\frac{a\lambda}{\overline{\Pi}_{h}(\log(1/\lambda))}+\frac{\lambda}{\overline{\Pi}_{h}(\log(1/\lambda))}\int_{(\log(\beta/\lambda), \infty)} V_{h}(dx)e^{x}\er\left(\exp\left\{-\lambda e^{x}S_{x}\right\}\right)\\
&\qquad +\frac{\lambda}{\overline{\Pi}_{h}(\log(1/\lambda))}\int_{(0, \log(\beta/\lambda))} V_{h}(dx)e^{x}\er\left(\exp\left\{-\lambda e^{x}S_{x}\right\}\right)\\
&=: \frac{a\lambda}{\overline{\Pi}_{h}(\log(1/\lambda))}+A_{1}(\lambda)+A_{2}(\lambda),
\end{split}
\end{equation*} for $\beta>0,$ and 
with $S_{x}:=\int^{T_{(-\infty,-x)}}_{0}e^{\xi_{s}}ds,$ $x>0.$ It follows from the fact that $t\mapsto \overline{\Pi}_{h}(\log(t))$ is regularly varying at infinity with an index $-\alpha,$ $\alpha\in(0,1),$ that $$\frac{a\lambda}{\overline{\Pi}_{h}(\log(1/\lambda))}\xrightarrow[\lambda\to 0]{}0.$$ It remains to study the behaviour of $A_{1}$ and $A_{2.}$ Observe that by construction the process $x\mapsto S_{x}$ is increasing, hence \begin{equation}\label{Sincreasing} 
S_{\log(r\beta)}\leq S_{\log(\beta/\lambda)}\leq S_{x}\uparrow_{x\to\infty}I,\ \text{for any} \ x>\log(\beta/\lambda),\ 1>r\lambda, \lambda>0.\end{equation}
Using the uniformity property in (\ref{U})  and the latter upper bound we can estimate $A_{1}.$  Indeed, we have that for $\epsilon,\delta>0,$$1/\lambda>r,$  and $\beta>0,$ 
\begin{equation}\label{eq:8a}
\begin{split}
&A_{1}(\lambda)\leq \frac{\lambda}{\overline{\Pi}_{h}(\log(1/\lambda))}\int_{(\log(\beta/\lambda), \infty)} V_{h}(dx)e^{x}\er\left(\exp\left\{-\lambda e^{x}S_{\log(r\beta)}\right\}\right)\\
&\leq \sum_{n\geq 0}\frac{\lambda}{\overline{\Pi}_{h}(\log(1/\lambda))}\int_{(n\delta+\log(\beta/\lambda), (n+1)\delta+\log(\beta/\lambda))} V_{h}(dx)e^{x}\er\left(\exp\left\{-\lambda e^{x}S_{\log(r\beta)}\right\}\right)\\
&\leq \sum_{n\geq 0}\frac{\left(\overline{V}_{h}(\log(\beta/\lambda)+n\delta)-\overline{V}_{h}(\log(\beta/\lambda)+(n+1)\delta)\right)}{\overline{\Pi}_{h}(\log(1/\lambda))}\beta e^{(n+1)\delta}\er\left(\exp\left\{-\beta e^{n\delta}S_{\log(r\beta)}\right\}\right)\\
&\leq \frac{1}{(\phi_{h}(-\alpha))^{2}}\frac{\overline{\Pi}_{h}(\log(\beta/\lambda))}{\overline{\Pi}_{h}(\log(1/\lambda))}\beta\sum_{n\geq 0}\left[\left(e^{-\alpha(n\delta)}-e^{-\alpha(n+1)\delta)}\right)+\epsilon\delta\right]e^{(n+1)\delta}\er\left(\exp\left\{-\beta e^{n\delta}S_{\log(r\beta)}\right\}\right)\\
\end{split}
\end{equation}
By the monotonicity of the exponential function, elementary arguments and making a change of variables $u=\beta e^{x-\delta},$ it follows that
\begin{equation}\label{eq:9}
\begin{split}
&\sum_{n\geq 0}\left[\left(e^{-\alpha(n\delta)}-e^{-\alpha(n+1)\delta)}\right)\right]e^{(n+1)\delta}\er\left(\exp\left\{-\beta e^{n\delta}S_{\log(r\beta)}\right\}\right)\\&\leq\sum_{n\geq0}\alpha\int^{(n+1)\delta}_{n\delta}dxe^{-\alpha x}e^{\delta}e^{x}\er\left(\exp\left\{-\beta e^{x-\delta}S_{\log(r\beta)}\right\}\right)\\
&=e^{\delta}\alpha\int^{\infty}_{0}dxe^{-\alpha x}e^{x}\er\left(\exp\left\{-\beta e^{x-\delta}S_{\log(r\beta)}\right\}\right)\\
&=\alpha\beta^{\alpha-1}e^{\delta(2-\alpha)}\int^{\infty}_{\beta e^{-\delta}}duu^{-\alpha}\er\left(\exp\left\{-uS_{\log(r\beta)}\right\}\right)
\end{split}
\end{equation} 
And similarly that
\begin{equation}\label{eq:10}
\begin{split}
\epsilon\sum_{n\geq 0}\delta e^{(n+1)\delta}\er\left(\exp\left\{-\beta e^{n\delta}S_{\log(r\beta)}\right\}\right)&\leq \epsilon \beta^{-1}e^{2\delta}\int^\infty_{\beta e^{-\delta}}du\er\left(\exp\left\{-uS_{\log(r\beta)}\right\}\right)\\
&\leq \epsilon \beta^{-1}e^{2\delta}\int^\infty_{0}du\er\left(\exp\left\{-uS_{\log(r\beta)}\right\}\right)\\ 
&= \epsilon \beta^{-1}e^{2\delta}\er\left(\frac{1}{S_{\log(r\beta)}}\right) \end{split}
\end{equation}
Using the former and latter inequalities in (\ref{eq:8a}) and making $\lambda\to 0$ we get that for $\delta, \beta, \epsilon, r>0,$
\begin{equation}
\begin{split}
\limsup_{\lambda\to 0}A_{1}(\lambda)\leq \frac{1}{\left(\phi_{h}(-\alpha)\right)^2}\alpha e^{\delta(2-\alpha)}\int^{\infty}_{\beta e^{-\delta}}duu^{-\alpha}\er\left(\exp\left\{-uS_{\log(r\beta)}\right\}\right)+\epsilon e^{2\delta}\er\left(\frac{1}{S_{\log(r\beta)}}\right).
\end{split}
\end{equation}
Now by the monotone convergence theorem and the fact that $S_{\log(r\beta)}\uparrow_{r\to\infty}I,$ $\pr$-a.s. it follows that when we make $\delta\to 0,$ then $r\to \infty,$ and finally $\beta\to 0,$ we obtain the upper bound 
\begin{equation}
\begin{split}
\limsup_{\lambda\to0}A_{1}(\lambda)\leq \frac{1}{\left(\phi_{h}(-\alpha)\right)^2}\alpha \int^{\infty}_{0}duu^{-\alpha}\er\left(\exp\left\{-uI\right\}\right)+\epsilon\er\left(\frac{1}{I}\right).
\end{split}
\end{equation}
On account of the hypothesis $-\infty<\er(\xi_{1})<0,$ we can ensure that $\er(I^{-1})<\infty,$ and hence we infer that
$$\limsup_{\lambda\to 0}A_{1}(\lambda)\leq \frac{\alpha\Gamma(1-\alpha)}{(\phi_{h}(-\alpha))^{2}}\er(I^{\alpha-1}).$$ 
An argument analogous to the one above gives also
$$\liminf_{\lambda\to 0}A_{1}(\lambda)\geq \frac{\alpha\Gamma(1-\alpha)}{(\phi_{h}(-\alpha))^{2}}\er(I^{\alpha-1}).$$
We will not reproduce the argument as the only point that needs special care is that we can choose $r$ large enough such that $\er(S^{-1}_{\log(r\beta)})<\infty$, which we now exists because $\er(S^{-1}_{\log(r\beta)})\downarrow_{r\to\infty}\er(I^{-1})<\infty.$  
 
 To finish the proof we need to prove that $A_{2}(\lambda)\xrightarrow[\lambda \to 0]{}0.$ To that end we start by observing that $A_{2}$ can be bounded by above by
\begin{equation*}
\begin{split}
\limsup_{\lambda\to 0}A_{2}(\lambda)&\leq=\frac{\lambda\int^{\log(\beta/\lambda)}_{0}V_{h}(dx)(e^x-1)+\lambda V_{h}(0,\log(\beta/\lambda)]}{\overline{\Pi}_{h}(\log(1/\lambda))}\\
&= \frac{\lambda\int^{\log(\beta/\lambda)}_{0}du e^{u}\left(\overline{V}_{h}(u)-\overline{V}_{h}(\log(\beta/\lambda))\right)+\lambda V_{h}(0,\log(\beta/\lambda)]}{\overline{\Pi}_{h}(\log(1/\lambda))}\\
&\leq \frac{\lambda\int^{\beta/\lambda}_{1}du\overline{V}_{h}(u)+\lambda V_{h}(0,\log(\beta/\lambda)]}{\overline{\Pi}_{h}(\log(1/\lambda))}\\&\sim\frac{(1+\alpha)^{-1}(\beta/\lambda)\lambda\overline{V}(\log(\beta/\lambda))\left[1+o(1)\right]}{\overline{\Pi}_{h}(\log(1/\lambda))}\\
&\sim \frac{\beta^{(1-\alpha)}}{(1+\alpha)(\phi_{h}(-\alpha))^{2}},
\end{split}
\end{equation*}
where the first equality follows by an integration by parts, the first estimate is a consequence of Karamata's Theorem (Proposition 1.5.8 in \cite{BGT}) and the fact that $\int^{\beta/\lambda}_{1}du\overline{V}_{h}(u)$ tends to infinity as $\lambda\to 0,$ and finally the last estimate follows from the fact in (\ref{U}). Therefore making $\beta\to 0,$ we obtain that $$0\leq \limsup _{\lambda\to 0}A_{2}(\lambda)\leq 0,$$ which finishes the proof.  
\end{proof}
\begin{proof}[Proof of Lemma~\ref{lemma:5}: case $\alpha=1$]
We will prove that 
\begin{equation}\label{eq=alpha1}\lim_{y\to\infty}\frac{\int^{\lambda y}_{y}\overline{\Pi}_{Y}(u)du}{y\overline{\Pi}_{h}(\log(y))}=\frac{1}{(\phi_{h}(-1))^2}\log(\lambda),\qquad \lambda >1.\end{equation} The result will follow from this on account of Theorem 3.6.8 in \cite{BGT}, which allow us to ensure that in that case $$\lim_{y\to\infty}\frac{\overline{\Pi}_{Y}(y)}{\overline{\Pi}_{h}(\log(y))}=\frac{1}{(\phi_{h}(-1))^2}.$$
To establish an upper bound for the limit in (\ref{eq=alpha1}) we observe that from Lemma \ref{lemma:1} that the numerator can be written as follows 
\begin{equation*}
\begin{split}
\int^{\lambda y}_{y}\overline{\Pi}_{Y}(u)du&=\int_{(\log(y\beta),\infty)}V_{h}(dx)e^x\pr(y<e^{x}S_{x}<\lambda y)\\
&\ +\int_{(0,\log(y\beta))}V_{h}(dx)e^x\pr(y<e^{x}S_{x}<\lambda y)\\
&:=B_{1}(y)+B_{2}(y)
\end{split}
\end{equation*}
for $\beta>0$ and $S_{x}:=\int^{T_{(-\infty,-x)}}_{0}e^{\xi_{s}}ds, \ x>0.$ The term $B_{1}$ can be bounded by above and below by 
\begin{equation}\label{eq:b1}
\begin{split}
&\int_{(\log(s\beta),\infty)}V_{h}(dx)e^x\pr(ye^{-x}< S_{\log(r\beta)}, I<\lambda ye^{-x})\leq B_{1}(y)\\
&\leq \int_{(\log(s\beta),\infty)}V_{h}(dx)e^x\pr(ye^{-x}< I, S_{\log(r\beta)}<\lambda ye^{-x}),
\end{split}
\end{equation}
for any $r<y;$ these follow from the inequalities in (\ref{Sincreasing}).  
Now, let $\delta,\epsilon>0$ and use the uniformity in (\ref{U}) to bound by below the left hand side in the equation (\ref{eq:b1}) as follows 
\begin{equation}\label{eq:15}
\begin{split}
&\frac{1}{y\overline{\Pi}_{h}(\log(y\beta))}\int_{(\log(y\beta),\infty)}V_{h}(dx)e^x\pr(ye^{-x}< S_{\log(r\beta)}, I<\lambda ye^{-x})\\
&\geq \beta\sum_{n\geq 0}\frac{\left(\overline{V}_{h}(\log(y\beta)+n\delta)-\overline{V}({\log(y\beta)+(n+1)\delta})\right)}{y\overline{\Pi}_{h}(\log(y\beta))}e^{n\delta}\pr(\beta^{-1} e^{-n\delta}< S_{\log(r\beta)}, I<\lambda \beta^{-1} e^{-(n+1)\delta})\\
&\geq \beta\sum_{n\geq 0}\left[\frac{1}{(\phi_{h}(-1))^2}\left(e^{-n\delta}-e^{-(n+1)\delta}\right)-\epsilon\delta\right]e^{n\delta}\pr(\beta^{-1} e^{-n\delta}< S_{\log(r\beta)}, I<\lambda \beta^{-1} e^{-(n+1)\delta}).
\end{split}
\end{equation}
for $y$ large enough. To study the right most term in the latter equation we argue as in (\ref{eq:9}) to get 
\begin{equation}\label{eq:16}
\begin{split}
&\sum_{n\geq 0}\left[\left(e^{-n\delta}-e^{-(n+1)\delta}\right)\right]e^{n\delta}\pr(\beta^{-1} e^{-n\delta}< S_{\log(r\beta)}, I<\lambda \beta^{-1} e^{-(n+1)\delta})\\
&\geq  e^{-\delta}\int^{\infty}_{0}dx\pr(\beta^{-1} e^{-(x-\delta)}< S_{\log(r\beta)}, I<\lambda \beta^{-1} e^{-(x+\delta)})\\
&= e^{-\delta}\int^{\beta^{-1} e^{\delta}}_{0}\frac{du}{u}\pr(u< S_{\log(r\beta)}, I<\lambda u e^{-2\delta}).
\end{split}
\end{equation} 
The rightmost term in the latter inequality is finite as $S_{\log(r\beta)}\leq I,$ $\pr$-a.s. and hence by Fubini's theorem $$\int^{\beta^{-1} e^{\delta}}_{0}\frac{du}{u}\pr(u< S_{\log(r\beta)}, I<\lambda u e^{-2\delta})\leq \int^{\infty}_{0}\frac{du}{u}\pr(I e^{2\delta}/\lambda<u<I)=2\delta+\log(\lambda).$$
Moreover arguing as in (\ref{eq:10}) we obtain that
\begin{equation}\label{eq:17}
\begin{split}
&\epsilon\sum_{n\geq 0}\delta e^{n\delta}\pr(\beta^{-1} e^{-n\delta}< S_{\log(r\beta)}, I<\lambda \beta^{-1} e^{-(n+1)\delta})\\
&\leq \epsilon e^{-\delta}\int^\infty_{0}dxe^{x}\pr(\beta^{-1} e^{-x}< S_{\log(r\beta)}, I<\lambda \beta^{-1} e^{-x})\\
&=\epsilon\beta^{-1}e^{-\delta}\int^\infty_{\beta}du\pr(S^{-1}_{\log(r\beta)}<u<\lambda I^{-1})\\
&\leq \epsilon\beta^{-1}e^{-\delta}\int^\infty_{0}du\pr(I^{-1}<u<\lambda I^{-1})= \epsilon\beta^{-1}e^{-\delta}(\lambda-1)\er(I^{-1})<\infty.
\end{split}
\end{equation}
Putting the pieces together we get 
\begin{equation*}
\begin{split}
\liminf_{y\to\infty}\frac{B_{1}(y)}{y\overline{\Pi}_{h}(\log(y))}&=\lim_{y\to\infty}\frac{\overline{\Pi}_{h}(\log(y\beta))}{\overline{\Pi}_{h}(\log(y))}\liminf_{y\to\infty}\frac{B_{1}(y)}{y\overline{\Pi}_{h}(\log(y\beta))}\\
&\geq \frac{e^{-\delta}}{(\phi_{h}(-1))^2}\int^{\beta^{-1} e^{\delta}}_{0}\frac{du}{u}\pr(u< S_{\log(r\beta)}, I<\lambda u e^{-2\delta})\\
&\ -\epsilon e^{-\delta}\er\left(I^{-1}\right) ,\qquad \delta, r, \beta>0.
\end{split}
\end{equation*}
By making $\epsilon, \delta\to 0,$ then $r\to\infty$ and finally $\beta\to 0$ and using the monotone convergence theorem we obtain the lower bound 
\begin{equation*}
\begin{split}
\liminf_{y\to\infty}\frac{B_{1}(y)}{y\overline{\Pi}_{h}(\log(y))}\geq \frac{1}{(\phi_{h}(-1))^2}\log(\lambda),\qquad \lambda>1.
\end{split}
\end{equation*}
It is proved in a similar way that 
\begin{equation*}
\begin{split}
\limsup_{y\to\infty}\frac{B_{1}(y)}{y\overline{\Pi}_{h}(\log(y))}\leq \frac{1}{(\phi_{h}(-1))^2}\log(\lambda),\qquad \lambda>1,
\end{split}
\end{equation*} we omit the details.
So, to finish the proof we need to prove that the term $B_{2}$ is $o(y\overline{\Pi}_{h}(\log(y))).$ Indeed, observe that by Fubini's theorem  and Tchebyshev's inequality it follows that $$\er(I)=\int^{\infty}_{0}\er\left(e^{\xi_{s}}\right)ds=\frac{1}{\psi(1)}<\infty,\qquad \pr(I>y)\leq \frac{\er(I)}{y}=\frac{1}{y\psi(1)},\qquad y>0.$$ 
Using this inequality and making an integration by parts in the following expression we conclude that
\begin{equation*}
\begin{split}
\int_{(0,\log(y\beta))}V_{h}(dx)e^x\pr(y<e^{x}S_{x}<\lambda y)&\leq\int_{(0,\log(y\beta))}V_{h}(dx)e^x\pr(y<e^{x}I)\\
&\leq \frac{1}{y\psi(1)}\int_{(0,\log(y\beta))}V_{h}(dx)e^{2x}\\
&= \frac{1}{y\psi(1)}\left(V_{h}(0,\log(y\beta))+\int_{(0,\log(y\beta))}V_{h}(dx)(e^{2x}-1)\right)\\
&\leq \frac{1}{y\psi(1)}\left(V_{h}(0,\infty)+2\int_{(0,\log(y\beta))}e^{2x}\overline{V}_{h}(x)dx\right).
\end{split}
\end{equation*}
Moreover, by making a change of variables $u=e^x$ and using Karamata's Theorem we get
\begin{equation*}
\begin{split}
\int_{(0,\log(y\beta))}e^{2x}\overline{V}_{h}(x)dx&=\int^{y\beta}_{1}u\overline{V}_{h}(\log(u))du\\
&\sim (y\beta)^2\overline{V}_{h}(\log(y\beta)),\qquad y\to\infty.
\end{split}
\end{equation*}
It follows from the latter and former estimates, the estimate in (\ref{renewalasym}) and the fact that $y^2\overline{\Pi}_{h}(\log(y))$ tends to $\infty$ as $y\to\infty$ that
\begin{equation}
\begin{split}
0\leq \limsup_{y\to\infty}\frac{B_{2}(y)}{y\overline{\Pi}_{h}(\log(y))}&\leq \limsup_{y\to\infty}\frac{\left(V_{h}(0,\infty)+2\int_{(0,\log(y\beta))}e^{2x}\overline{V}_{h}(x)dx\right)}{\psi(1)y^2\overline{\Pi}_{h}(\log(y))}= \frac{2\beta}{\psi(1)}. 
\end{split}
\end{equation}
Making $\beta$ tend to $0$ we get the claimed result.
\end{proof}
\begin{proof}[Proof of Lemma~\ref{lemma:5}: case $\alpha>1$]
Observe that by the hypotheses [H4], the integral $\int^{\infty}_{0}V_{h}(dx)e^{x}<\infty,$ (see e.g. \cite{KKM} Proposition 4.2). In account of the monotone density theorem (Theorem 1.7.2 in \cite{BGT}) for regularly varying functions it is enough to prove that 
$$y\mapsto\int^{\infty}_{y}\overline{\Pi}_{Y}(u)du=\int_{\re_{+}}V_{h}(dx)e^{x}\pr\left(e^{x}\int^{T_{(-\infty,-x)}}_{0}e^{\xi_{s}}ds> y\right),\qquad y\geq 0,$$ is regularly varying at infinity with index $-(1+\alpha).$ As in the proof of the case $\alpha<1$ we will compare the latter quantity to $y\overline{\Pi}_{h}(\log(y)),$ at infinity. Indeed, an application of Fubini's theorem leads 
\begin{equation}\label{eq:19}
\begin{split}
\int_{(0,\infty)}V_{h}(dx)e^{x}\pr\left(e^{x}\int^{T_{(-\infty,-x)}}_{0}e^{\xi_{s}}ds> y\right)&=\er\left(\int_{(0,\infty)}V_{h}(dx)e^{x}1_{\left\{x>\log\left(y/S_{x}\right)\right\}}\right)\\
&\leq \er\left(\int_{(0,\infty)}V_{h}(dx)e^{x}1_{\left\{x>\log\left(y/I\right)\right\}}1_{\{I^{-1}>\delta \}}\right)\\
&+ \er\left(\int_{(\log(y\delta ),\infty)}V_{h}(dx)e^{x}1_{\left\{x>\log\left(y/I\right)\right\}}1_{\{I^{-1}\leq \delta \}}\right)\\
&+ \er\left(\int_{(0,\log(y\delta )]}V_{h}(dx)e^{x}1_{\left\{x>\log\left(y/I\right)\right\}}1_{\{I^{-1}\leq \delta \}}\right)\\
&=:\er\left(F(y/I)1_{\{I^{-1}>\delta \}}\right)+C_{1}(y)+C_{2}(y)
\end{split}
\end{equation}
for $y, \delta>0,$ where $F(z):=\int_{(0,\infty)} V_{h}(dx)e^x1_{\{x>\log(z)\}},$ $z>0.$ 
To study the first term on the rightmost term in equation (\ref{eq:19}) we claim that $F$ is a function which is regularly varying with an index $1-\alpha$ at infinity and such that
\begin{equation}\label{eq:21}
\lim_{z\to\infty}\frac{F(z)}{z\overline{V}_{h}(\log(z))}=\frac{\alpha}{\alpha-1}.
\end{equation}  
Furthermore, on account of the regular variation of $F$ with a negative index $1-\alpha,$ we have that for $v>0,$ $$\lim_{z\to\infty}\frac{F(z\lambda)}{F(z)}=\lambda^{1-\alpha}, \qquad \text{uniformly in}\ \lambda\in(v,\infty),$$ see e.g. chapter 1 in \cite{BGT}.
Let us prove that the limit in equation (\ref{eq:21}) hold; the regular variation of $F$ follows therefrom. Using Fubini's theorem and Karamata's Theorem  we get that
\begin{equation*}
\begin{split}
F(z)&=z\overline{V}_{h}(\log(z))+\int^\infty_{\log(z)}V_{h}(dx)\left(e^{x}-e^{\log(z)}\right)\\
&=z\overline{V}_{h}(\log(z))+\int^\infty_{\log(z)}dxe^{x}\overline{V}_{h}(x)\\
&=z\overline{V}_{h}(\log(z))+\int^\infty_{z}du\overline{V}_{h}(\log(u))\\
&\sim z\overline{V}_{h}(\log(z))\left(1+\frac{1}{\alpha-1}\right),\qquad z\to\infty,
\end{split}
\end{equation*} which proves (\ref{eq:21}).
From the properties of $F$ and the estimate (\ref{renewalasym}) we infer that for $\delta>0,$
\begin{equation}\label{eq:22}\lim_{y\to\infty}\frac{\er\left(F(y/I)1_{\{I^{-1}>\delta\}}\right)}{y\overline{\Pi}_{h}(\log(y))}=\frac{1}{(\phi_{h}(-\alpha))^2}\frac{\alpha}{\alpha-1}\er\left(I^{\alpha-1}1_{\{I^{-1}>\delta\}}\right).\end{equation}
We next study the terms $C_{1}$ and $C_{2}.$ The term $C_{1}$ can be bounded by above as follows
\begin{equation*}\label{eq:23}
\begin{split}
C_{1}(y)&\leq\pr(I^{-1}\leq \delta)\int^{\infty}_{\log(y\delta)}V_{h}(dx)e^{x}\\
&\leq \delta^{\alpha}\er(I^{\alpha})\left(y\delta\overline{V}_{h}(\log(y\delta))+\int^\infty_{y\delta}du\overline{V}_{h}(\log(u))\right)\\
&\sim \frac{\alpha \delta^{\alpha}\er(I^{\alpha})}{\alpha-1}y\delta\overline{V}_{h}(\log(y\delta)),\qquad y\to\infty.
\end{split}
\end{equation*}where the first inequality follows from the very definition of $C_{1},$ the second inequality from Tchebyshev's inequality and an integration by parts and finally the estimate follows from Karamata's Theorem. Using the estimate (\ref{renewalasym}) and the regular variation of $\overline{V}_{h}(\log(\cdot))$ we conclude that
\begin{equation}\label{eq:24}0\leq \limsup\frac{C_{1}(y)}{y\overline{\Pi}_{h}(\log(y))}\leq  \delta\frac{\alpha^2\er(I^{\alpha})}{(\alpha-1)(-\phi_{h}(-\alpha))^2}.\end{equation}
We now need to determine the rate of growth of $C_{2}.$ This can be done as follows
\begin{equation*}\label{eq:25}
\begin{split}
C_{2}(y)&\leq \int^{\log(y\delta)}_{0}{V}_{h}(dx)e^{x}\pr(I>ye^{-x})\\
&\leq y^{-\alpha}\er(I^{\alpha})\int^{\log(y\delta)}_{0}{V}_{h}(dx)e^{(1+\alpha)x}\\
&= y^{-\alpha}\er(I^{\alpha})\left(V_{h}(0,\log(y\delta))+\int^{y\delta}_{1}du u^{\alpha}\overline{V}_{h}(\log(u))\right)\\
&\sim \er(I^{\alpha})y^{-\alpha}(\delta y)^{\alpha+1}\overline{V}(\log(\delta y)),
\end{split}
\end{equation*}
where the first inequality follows from the definition of $C_{2},$ the second from an application of Tchebyshev's inequality, the equality follows by an integration by parts and a change of variables and finally the estimate follows from Karamata's Theorem  and the fact that $V_{h}$ is a finite measure. We infer therefrom using the estimate (\ref{renewalasym}) and the regular variation of $\overline{V}_{h}(\log(\cdot))$ that
\begin{equation}\label{eq:26}
\limsup_{y\to\infty}\frac{C_{2}(y)}{y\overline{\Pi}_{h}(\log(y))}\leq \delta\frac{\er(I^{\alpha})}{(-\phi_{h}(-\alpha))^2}
\end{equation}
Plugging the estimates in (\ref{eq:22}), (\ref{eq:24}), (\ref{eq:26}) in the inequality (\ref{eq:19}) we conclude that 
\begin{equation}\label{eq:27}
\limsup_{y\to\infty}\frac{\int^{\infty}_{y}\overline{\Pi}_{Y}(u)du}{y\overline{\Pi}_{h}(\log(y))}\leq \frac{1}{(\phi_{h}(-\alpha))^2}\frac{\alpha}{\alpha-1}\er\left(I^{\alpha-1}1_{\{I^{-1}>\delta\}}\right)+\delta\gamma_{\alpha},\qquad \text{for} \ \delta>0,
\end{equation}
where $\gamma_{\alpha}$ is a positive and finite constant whose value is the addition of the constants appearing in (\ref{eq:24}) and (\ref{eq:26}). Making $\delta$ tend to $0$ we get the upper bound
\begin{equation}\label{eq:28}
\limsup_{y\to\infty}\frac{\int^{\infty}_{y}\overline{\Pi}_{Y}(u)du}{y\overline{\Pi}_{h}(\log(y))}\leq \frac{1}{(\phi_{h}(-\alpha))^2}\frac{\alpha}{\alpha-1}\er\left(I^{\alpha-1}\right).
\end{equation}
 To obtain a lower bound we use the inequality
\begin{equation*}\label{eq:20}
\begin{split}
\frac{\int_{(0,\infty)}V_{h}(dx)e^{x}\pr\left(e^{x}\int^{T_{(-\infty,-x)}}_{0}e^{\xi_{s}}ds> y\right)}{y\overline{\Pi}_{h}(\log(y))}&\geq \frac{\er\left(\int_{(r,\infty)}V_{h}(dx)e^{x}1_{\left\{x>\log\left(y/S_{r}\right)\right\}}1_{\{S^{-1}_{r}>\delta\}}\right)}{y\overline{\Pi}_{h}(\log(y))}\\
&=\frac{\er\left(F(y/S_{r})1_{\{S^{-1}_{r}>\delta\}}\right)}{y\overline{\Pi}_{h}(\log(y))}
\end{split}
\end{equation*}
for $\delta>0$ and $\log(y\delta)>r>0.$ Using this inequality and arguing as for (\ref{eq:22}) we get that  
\begin{equation*}
\liminf_{y\to\infty}\frac{\int^\infty_{y}\overline{\Pi}_{Y}(z)dz}{y\overline{\Pi}_{h}(\log(y))}\geq \frac{1}{(\phi_{h}(-\alpha))^2}\frac{\alpha}{\alpha-1}\er\left(S_{r}^{\alpha-1}1_{\{S^{-1}_{r}>\delta\}}\right),\qquad \text{for} \ \delta>0, r>0.
\end{equation*}
Making $a$ tend to $0$ and then $r$ towards $\infty,$ using the monotone convergence theorem and using the estimate in equation (\ref{eq:28}) we conclude that 
\begin{equation*}
\lim_{y\to\infty}\frac{\int^\infty_{y}\overline{\Pi}_{Y}(z)dz}{y\overline{\Pi}_{h}(\log(y))}= \frac{1}{(\phi_{h}(-\alpha))^2}\frac{\alpha}{\alpha-1}\er\left(I^{\alpha-1}\right).
\end{equation*}
The conclusion of the Lemma follows therefrom using the monotone density theorem for regularly varying functions, Theorem 1.7.2 in \cite{BGT}.
\end{proof}
\section{Proof of Theorem~\ref{teo3}}\label{sect4}
\subsection{Proof of assertion (i) }
The assumptions (MZ1-2) imply that $\er(\widehat{h}_{1})=\mu_{\widehat{h}}<\infty,$ (Corollary 4 Section 4.4 in \cite{doneybook}) and together with Theorem 3-(a) in \cite{R07b}  imply that 
\begin{equation}\label{eq:35c}
\overline{\Pi}_{h}(y)\sim\frac{1}{\mu_{\widehat{h}}}\int^{\infty}_{y}\overline{\Pi}^{+}(x)dx,\qquad y\to \infty.
\end{equation} Hence, by asymptotic equivalence, $\Pi_{h}\in \mathcal{S}_{0}$ (Lemma 2.4 and Corollary 2.1 in \cite{pakes2004} or Lemma 2.1 in \cite{watanabe}) and so we can apply the results in \cite{KKM} to ensure that (\ref{renewalasym}) holds with $\alpha=0$. So, to prove our claim it will be enough to prove that 
$$\lim_{y\to\infty}\frac{\overline{\Pi}_{Y}(y)}{\overline{\Pi}_{h}(\log(y))}=0.$$
To that end we start by proving that 
\begin{equation}\label{eq:34a}
\lim_{z\to\infty}\frac{\int^{\log(z)}_{0}V_{h}(dx)e^{x}}{z\overline{V}_{h}(\log(z))}=0.
\end{equation}
Indeed, by an application of Fubini's theorem and a change of variables we get the identity
\begin{equation*}\label{eq:35}
\begin{split}
\int^{z}_{0}V_{h}(dx)e^{x}&=V_{h}(0,z)+\int^{z}_{0}V_{h}(dx)(e^{x}-1)\\
&=V_{h}(0,z)+\int^{z}_{0}due^{u}\left(\overline{V}_{h}(u)-\overline{V}_{h}(z)\right)\\
&=V_{h}(0,\infty)-e^{z}\overline{V}_{h}(z)+\int^{e^{z}}_{1}ds\overline{V}_{h}(\log(s)),
\end{split}
\end{equation*} for $z>0.$
On account of the fact that under the present assumptions $\overline{V}_{h}(\log(\cdot))$ is slowly varying, we  can apply Karamata's Theorem  to get that $$\int^{y}_{1}ds\overline{V}_{h}(\log(s))\sim y\overline{V}_{h}(\log(y)).$$ Also by properties of slowly varying function we have that $y\overline{V}_{h}(\log(y))\to\infty$ as $y\to\infty.$ Putting the pieces together we conclude that the assertion in (\ref{eq:34a}) holds true. 

So by Lemma~\ref{lemma:1}, Fubini's theorem and elementary manipulations we have that 
\begin{equation}\label{eq:36}
\begin{split}
\int^{y}_{0}\overline{\Pi}_{Y}(x)dx&=\int_{(0,\infty)}V_{h}(dx)e^{x}\pr\left(e^{x}S_{x}\leq y\right)\\
&\leq \int_{(0,\log(y))}V_{h}(dx)e^{x}+\int_{(\log(y),\infty)}V_{h}(dx)e^{x}\pr\left(e^{x}S_{r}\leq y\right)\\
&\leq o(y\overline{V}_{h}(\log(y)))+\int_{(0,\infty)}V_{h}(dx)e^{x}\pr\left(e^{x}S_{r}\leq y\right)\\
&\leq o(y\overline{V}_{h}(\log(y)))+\er\left(\int_{(0,\log(y/S_{r}))}V_{h}(dx)e^{x}1_{\{S^{-1}_{r}\leq \beta\}}\right)\\&\ +\er\left(\int_{(0,\log(y\beta))}V_{h}(dx)e^{x}1_{\{S^{-1}_{r}>\beta\}}\right)+\er\left(\int_{(\log(y\beta),\log(y/S_{r}))}V_{h}(dx)e^{x}1_{\{S^{-1}_{r}>\beta\}}\right),
\end{split}
\end{equation} 
for $\beta>0,$ $0<r<\log(y),$ and $y$ large enough. We assume that $r$ is such that $\er(S^{-1}_{r})<\infty,$ which is possible as $S_{r}\uparrow I$ as $r\to\infty$ and by the monotone convergence theorem $\lim_{r\to\infty}\er\left(S^{-1}_{r}\right)=\er(I^{-1})<\infty,$ by the hypothesis that $\er(-\xi_{1})\in(0,\infty).$ The terms on the rightmost hand side in equation~(\ref{eq:36}) can be analyzed as follows: 
\begin{equation*}
\begin{split}
&\er\left(\int_{(0,\log(y/S_{r}))}V_{h}(dx)e^{x}1_{\{S^{-1}_{r}\leq \beta\}}\right)+\er\left(\int_{(0,\log(y\beta))}V_{h}(dx)e^{x}1_{\{S^{-1}_{r}>\beta\}}\right)\\
&\leq 2\int_{(0,\log(y\beta))}V_{h}(dx)e^{x}=o(y\overline{V}_{h}(\log(y))),
\end{split}
\end{equation*}
and  
\begin{equation*}
\begin{split}
\er\left(\int_{(\log(y\beta),\log(y/S_{r}))}V_{h}(dx)e^{x}1_{\{S^{-1}_{r}>\beta\}}\right)\leq \er\left(\frac{1}{S_{r}}1_{\{S^{-1}_{r}>\beta\}}\right)y\overline{V}_{h}(\log(y\beta)).
\end{split}
\end{equation*}
We deduce therefrom that $$\limsup_{y\to\infty}\frac{\int^{y}_{0}\overline{\Pi}_{Y}(x)dx}{y\overline{V}_{h}(\log(y))}\leq \er\left(\frac{1}{S_{r}}1_{\{S^{-1}_{r}>\beta\}}\right),\qquad \beta>0.$$ Making $\beta\to\infty$ we infer that  
$$\lim_{y\to\infty}\frac{\int^{y}_{0}\overline{\Pi}_{Y}(x)dx}{y\overline{V}_{h}(\log(y))}=0.$$ It follows from (\ref{renewalasym}) and (\ref{eq:35c}) that
$$\lim_{y\to\infty}\frac{\int^{y}_{0}\overline{\Pi}_{Y}(x)dx}{y\int^{\infty}_{\log(y)}\overline{\Pi}^{+}(x)dx}=0.$$ The claim follows from the monotone density theorem for regularly varying functions.
\subsection{Proof of assertion (ii)}
The proof in this part is quite similar, but simpler, to that of Lemma~\ref{lemma:5}. The main difference is that here we will use the renewal theorem instead of the results in~\cite{KKM}, which we recall were crucial in our development. Hence we will just outline the main steps of the proof. 

Assume that $\xi$ satisfies the hypotheses in (R1-3). This implies that $h$ is not arithmetic, 
$$\er(e^{\theta h_{1}})=1,\ \text{and}\ \mu^{(\theta)}_{h}:=\er(h_{1}e^{\theta h_{1}})<\infty.$$ The reason for this is the extended form of the Wiener-Hopf factorization stated in (\ref{WHextd}), as under this assumptions $[0,\theta]\subseteq C.$ Indeed, it implies that $$0=-\psi(\theta)=-\phi_{h}(-\theta)\phi_{\widehat{h}}(\theta), \qquad \phi_{\widehat{h}}(\theta)>0,\quad \Rightarrow \phi_{h}(-\theta)=0,$$ or equivalently $\er(e^{\theta h_{1}})=1.$
Moreover, by standard arguments 
$$\infty>\er(\xi_{1}e^{\theta\xi_{1}})=\lim_{\lambda\to\theta-}\frac{\psi(\lambda)}{\theta-\lambda}= \phi_{\widehat{h}}(\theta)\lim_{\lambda\to\theta-}\frac{-\phi_{h}(-\lambda)}{\theta-\lambda}=\phi_{\widehat{h}}(\theta)\er(h_{1}e^{\theta h_{1}}).$$ That the constant $\er(I^{\theta-1})$ is finite is proved in Lemma 2 in \cite{R07} whenever $0<\theta<1;$ whilst if $\theta>1$ it follows from Lemma~\ref{lemma:30} here, because by the strict convexity of $\psi$ we have that $\er\left(e^{(\theta-1)\xi_{1}}\right)=e^{\psi(\theta-1)}<1.$

\textit{Case $\theta<1$}. From Lemma~\ref{lemma:1} we know that the tail L\'evy measure of $Y$ is such that 
$$\int^{y}_{0}\overline{\Pi}_{Y}(x)dx=\int_{\re_{+}}V_{h}(dx)e^{x}\pr\left(e^{x}\int^{T_{(-\infty,-x)}}_{0}e^{\xi_{s}}ds\leq y\right),\qquad y\geq 0.$$ It is easily verified that the function $u\mapsto e^{(1-\theta)u}\pr\left( I \leq e^{-u}\right),$ $u\in \re,$ is directly Riemman integrable, because it is an integrable function which is the product of the exponential function and a decreasing function. The renewal theorem applied to the renewal measure $V^{*}_{h}(dx):=V_{h}(dx)e^{\theta x},$ $x\geq 0,$ implies that
\begin{equation}
\begin{split}
e^{-(1-\theta)y}\int^{e^{y}}_{0}\overline{\Pi}_{Y}(x)dx&=e^{-(1-\theta)y}\int_{\re_{+}}V_{h}(dx)e^{x}\pr\left(e^{x}\int^{T_{(-\infty,-x)}}_{0}e^{\xi_{s}}ds\leq e^{y}\right)\\
&=\int_{\re_{+}}V^{*}_{h}(dx)e^{(1-\theta)(x-y)}\pr\left(\int^{T_{(-\infty,-x)}}_{0}e^{\xi_{s}}ds\leq e^{-(x-y)}\right)\\
&=\int_{\re}V^{*}_{h}(dz+y)e^{(1-\theta)z}\pr\left(\int^{T_{(-\infty,-(z+y))}}_{0}e^{\xi_{s}}ds\leq e^{-z}\right)1_{\{z>-y\}}\\
&\xrightarrow[y\to\infty]{} \frac{1}{\mu^{(\theta)}_{h}}\int_{\re}due^{(1-\theta)u}\pr\left( I \leq e^{-u}\right)\\
&=\frac{1}{\mu^{(\theta)}_{h}}\int_{\re_{+}}dz z^{-(2-\theta)}\pr\left( I \leq z\right)\\
&=\frac{1}{\mu^{(\theta)}_{h}(1-\theta)}\er(I^{\theta-1}).
\end{split}
\end{equation}
The result follows from the monotone density theorem for regularly varying functions.

\textit{Case $\theta>1$.} Arguing as in the case $\theta<1$ we get
\begin{equation}
\begin{split}
e^{(\theta-1)y}\int^{\infty}_{e^{y}}\overline{\Pi}_{Y}(x)dx&=e^{(\theta-1)y}\int_{\re_{+}}V_{h}(dx)e^{x}\pr\left(e^{x}\int^{T_{(-\infty,-x)}}_{0}e^{\xi_{s}}ds> e^{y}\right)\\
&=\int_{\re_{+}}V^{*}_{h}(dx)e^{-(\theta-1)(x-y)}\pr\left(\int^{T_{(-\infty,-x)}}_{0}e^{\xi_{s}}ds > e^{-(x-y)}\right)\\
&=\int_{\re}V^{*}_{h}(dz+y)e^{-(\theta-1)z}\pr\left(\int^{T_{(-\infty,-(z+y))}}_{0}e^{\xi_{s}}ds> e^{-z}\right)1_{\{z>-y\}}\\
&\xrightarrow[y\to\infty]{} \frac{1}{\mu^{(\theta)}_{h}}\int_{\re}due^{-(\theta-1)u}\pr\left( I > e^{-u}\right)\\
&\frac{1}{\mu^{(\theta)}_{h}}\int_{\re}due^{(\theta-1)u}\pr\left( I > e^{u}\right)\\
&=\frac{1}{\mu^{(\theta)}_{h}}\int_{\re_{+}}dz z^{\theta-2}\pr\left( I > z\right)\\
&=\frac{1}{\mu^{(\theta)}_{h}(\theta-1)}\er(I^{\theta-1}).
\end{split}
\end{equation}
The result follows using the monotone density theorem for regularly varying functions.

\textit{Case $\theta=1$.} We proceed as in its analogue in the proof of Lemma~\ref{lemma:5} by discretizing the integral. Observe that we do not need the hypothesis that $\er(I^{-1})<\infty$ because for a renewal measure $U$ which satisfies the hypothesis of the Renewal Theorem, we can assume that for any $\delta,$ and any $\epsilon$ given, there exists a $t_{0}$ such that $$(1-\epsilon)\frac{\delta}{m}\leq U(t,t+\delta]\leq (1+\epsilon)\frac{\delta}{m},\qquad t\geq t_{0},$$ where $m$ denotes the mean of the inter-arrival distribution. This fact is used instead of the uniformity property (\ref{U}). Doing so we get that for any $0<\lambda<1,$ 
\begin{equation}
\begin{split}
\int^{y}_{\lambda y}\overline{\Pi}_{Y}(x)dx&=\int_{\re_{+}}V_{h}(dx)e^{x}\pr\left(\int^{T_{(-\infty,-x)}}_{0}e^{\xi_{s}}ds> e^{-(x-y)}\geq \lambda\int^{T_{(-\infty,-x)}}_{0}e^{\xi_{s}}ds\right)\\
&=\int_{\re_{+}}V^{*}_{h}(dx)\pr\left(\int^{T_{(-\infty,-x)}}_{0}e^{\xi_{s}}ds> e^{-(x-y)}\geq \lambda\int^{T_{(-\infty,-x)}}_{0}e^{\xi_{s}}ds\right)\\
&\xrightarrow[y\to\infty]{}\frac{1}{\mu^{\theta}_{h}}\int_{\re}dz\pr(I>e^{-z}\geq \lambda I)=\frac{1}{\mu^{\theta}_{h}}\log(1/\lambda).
\end{split}
\end{equation}
The result follows from Theorem 3.6.8 in \cite{BGT}.

{\gracias} I would like to thank Antonio Murillo for insightful discussions about the topic.

\end{document}